\documentclass{article}
\usepackage{amsmath,amsthm,amscd,amssymb}
\usepackage[mathscr]{eucal}
\usepackage{amssymb}
\usepackage{latexsym}

\topmargin=0mm \textheight=220mm

\theoremstyle{definition}
\newtheorem{Def}{Definition}[section]
\newtheorem{Thm}[Def]{Theorem}
\newtheorem{Prop}[Def]{Proposition}
\newtheorem{Rem}[Def]{Remark}
\newtheorem{Ex}[Def]{Example}

\numberwithin{equation}{section}

\title{On $p$-divisibility of Fourier coefficients of Siegel modular forms}

\author{Shoyu Nagaoka}

\date{}
\begin{document}

\maketitle
\noindent
\textbf{MSC 2020:}\; Primary 11F33, Secondary 11F46.\\
\textbf{Key Words and Phrases:}\; Congruence for modular forms, Eisenstein series


\begin{abstract}
\noindent
We describe the $p$-divisibility transposition for the Fourier coefficients
of Siegel modular forms. This provides a supplement to the result by Wilton for $p$-divisibility satisfied
by the Ramanujan $\tau$-function.
\end{abstract}

\section{Introduction}
\label{sec1}
The Fourier coefficients of modular forms are rich in number theoretic properties.
The motivation for the study lies in the congruence satisfied by the Ramanujan
 $\tau$-function $\tau(t)$. 
Function  $\tau(t)$ appears as the Fourier
coefficient of the elliptic cusp form $\Delta=q\prod_{n=1}^\infty(1-q^n)^{24}$
and it has many interesting properties (see, e.g., Serre \cite{Ser}).
One example of such properties
is the congruence relationship it satisfies.
It is known that the function $\tau(t)$ satisfies the congruence relation
$$
\tau (t) \equiv \sigma_{11}(t) \pmod{691},
$$
where $\sigma_{11}(t)=\sum_{0<d\mid t} d^{11}$. 

This is interpreted as the congruence relation between the modular form $\Delta$ and
the Eisenstein series $G_{12}$ of weight $12$:
$$
\Delta \equiv G_{12} \pmod{691},
$$
where $G_{12}$ is a constant multiple of the normalized Eisenstein
series $E_{12}$
(see $\S$ \ref{SE}, and Proposition \ref{Delta3}).
\\
As one of the results on $p$-divisibility,
we have the following classical result by Wilton \cite{W}:
\\
If $p$ is a prime number satisfying $\left(\frac{p}{23}\right)=-1$, then
$$
\tau(p) \equiv 0 \pmod{23},
$$
where $\left(\frac{\,p\,}{q}\right)$ is the Legendre symbol. This fact can also be paraphrased as
follows:
$$
\tau(t) \equiv 0 \pmod{23}\quad\text{if}\quad \chi_{-23}(t)=-1,
$$
where $\chi_{-23}(t)=\left(\frac{-23}{t}\right)$ is the Kronecker symbol.
\\
We consider the case of Siegel modular forms. It is known that the above modular form $\Delta$
can be lifted to a Siegel modular form $[\Delta]$ of degree 2. (To be precise, $[\Delta]$ is constructed
as the
Klingen Eisenstein series of degree 2 associated with $\Delta$ and satisfies $\Phi ([\Delta])=\Delta$
for the Siegel $\Phi$-operator by definition.)  B\"{o}cherer \cite{B1} proved that this modular form
is in the mod $23$ kernel of the theta operator:\;$\varTheta ([\Delta])\equiv 0 \pmod{23}$
 (see $\S$ \ref{ThetaOp}). This fact is consistent with its Fourier coefficient $a([\Delta],T)$ satisfying
$$
a([\Delta],T)\cdot\text{det}(T) \equiv 0 \pmod{23}.
$$
As indicated in Example \ref{Delta} of  $\S$ \ref{Eisen}, the Siegel modular form
$[\Delta]$ can be further lifted to a degree 3 Siegel modular form, which we denote by $[\Delta]^{(3)}$.
The constructed Siegel modular form $[\Delta]^{(3)}$ satisfies $\Phi([\Delta]^{(3)})=[\Delta]$,
and it has been proved that all of the Fourier coefficients $a([\Delta]^{(3)},T)$ for $T\in\Lambda_3^+$
 are divisible by $23$ (see Example \ref{Delta}).
These facts can be summarized as follows:
\vspace{2mm}
\\
(D-I)\; $a([\Delta]^{(3)},T) \equiv 0 \pmod{23}$ for all $T\in\Lambda_3^+$.
\vspace{1mm}
\\
(D-II)\; $a([\Delta],T) \equiv 0 \pmod{23}$ for $T\in\Lambda_2^+$ with $\text{det}(T)\not\equiv 0\pmod{23}$.
\vspace{1mm}
\\
(D-III)\; $a(\Delta,t)=\tau(t) \equiv 0 \pmod{23}$ for $t\in\mathbb{Z}_{>0}$ with $\chi_{-23}(t)=-1$.
\vspace{2mm}
\\
The above facts show that $p$-divisibility of the Fourier coefficients of a modular form
changes as its degree changes:
$$
\underset{\text{degree 3}}{[\Delta]^{(3)}}\;\;\overset{\Phi}{\longrightarrow}\;\;
\underset{\text{degree 2}}{[\Delta]}\;\;
\overset{\Phi}{\longrightarrow}\;\;\underset{\text{degree 1}}{\Delta}.
$$
We schematize the transposition of $p$-divisibility of the Fourier coefficients as follows.
\vspace{3mm}
\\
\begin{center}
\framebox(155,25)[c]{I:\,All $a(F,T^{(n)})$ are divisible by $p$}
\end{center}
$$
\Big{\downarrow}\quad \Phi
$$
\begin{center}
\framebox(205,25)[c]{II:\,``Most'' $a(\Phi (F),T^{(n-1)})$ are divisible by $p$}
\end{center}
$$
\Big{\downarrow}\quad \Phi
$$
\begin{center}
\framebox(255,25)[c]{III:\,Certain ``special'' $a(\Phi^2(F),T^{(n-2)})$ are divisible by $p$}
\end{center}
$$
\Big{\downarrow}\quad \Phi
$$
where $\Phi$ is the Siegel operator.
\\
\\
We will provide examples of modular forms that fit each of these stages by means of the
Siegel Eisenstein series.
\vspace{1mm}
\\
Let $E_k^{(n)}$ be the Siegel Eisenstein series for the Siegel modular group $\Gamma_n$ and
$a(E_k^{(n)},T)$ be the Fourier coefficients (see $\S$ \ref{SE}).
\vspace{2mm}
\\
The first result is as follows.
\vspace{2mm}
\\
\textbf{Theorem M-1.}\quad {\it Let $p$ be a prime number satisfying $p>7$ and $p \equiv 3 \pmod{4}$.
Then we have
\vspace{2mm}
\\
{\rm (1-I)}\quad $a(E^{(3)}_{\frac{p+1}{2}},T) \equiv 0 \pmod{p}$ for all $T\in \Lambda_3^+$,
\vspace{2mm}
\\
{\rm (1-II)}\quad $a(E^{(2)}_{\frac{p+1}{2}},T) \equiv 0 \pmod{p}$ for $T\in \Lambda_2^+$ with
${\rm det}(T) \not\equiv 0 \pmod{p}$,
\vspace{2mm}
\\
{\rm (1-III)}\quad  $a(E^{(1)}_{\frac{p+1}{2}},t) \equiv 0 \pmod{p}$ for $t\in \mathbb{Z}_{>0}$ with
                   $\chi_{-p}(t)=-1$, where $\chi_{-p}(*)=\left(\frac{-p}{*}\right)$ is the Kronecker
                   symbol.}
\vspace{4mm}
\\
Statement (1-I) above means that the Siegel Eisenstein series $E^{(3)}_{\frac{p+1}{2}}$ is a mod $p$
singular modular form, and in the case of  (1-II), it is in the mod $p$ kernel of the theta operator
 (see $\S$ \ref{ThetaOp}).

The above theorem shows that the Siegel Eisenstein series $E_{12}^{(n)}$\,$(n=1,2,3)$ behaves
like $\Delta$ with respect to $p$-divisibility when considering the case $p=23$.
In $\S$ \ref{Eisen}, Example \ref{Leech}, we mention that the Siegel theta series $\vartheta_{\mathcal{L}}$
attached to the Leech lattice $\mathcal{L}$ also behaves similarly for $23$-divisibility.
It is interesting to note that there are several types of modular forms with similar properties at $p=23$.
\vspace{0mm}
\\
The second result is as follows.
\vspace{2mm}
\\
\textbf{Theorem M-2.}\quad {\it Let $p>5$ be a prime number. Then we have
\vspace{2mm}
\\
{\rm (2-I)}\quad $a(E_{p+1}^{(5)},T) \equiv 0 \pmod{p}$ for all $T\in\Lambda_5^+$,
\vspace{2mm}
\\
{\rm (2-II)}\quad $a(E_{p+1}^{(4)},T) \equiv 0 \pmod{p}$ for $T\in\Lambda_4^+$ with ${\rm det}(2T)\ne\square$,
\vspace{2mm}
\\
{\rm (2-II')}\quad $a(E_{p+1}^{(3)},T) \equiv 0 \pmod{p}$ for $T\in\Lambda_3^+$ with
                 ${\rm det}(T)\not\equiv 0 \pmod{p}$,
\vspace{2mm}
\\
{\rm (2-III)}\quad $a(E_{p+1}^{(2)},T) \equiv 0 \pmod{p}$ for $T\in\Lambda_2^+$ with
                 $\chi_T(p)=1$, where $\chi_T$ is the quadratic character defined by
                 $\chi_T(*)=\left(\frac{-{\rm det}(2T)}{*} \right)$.}
\vspace{3mm}
\\
For the general degree case, we have an example of modular forms that fit stages I and II above.
\vspace{2mm}
\\
\textbf{Theorem M-3.}\quad {\it
Assume that $n$ is even positive integer and $p$ is a prime number satisfying $p>n+3$ and
$p \equiv (-1)^\frac{n}{2} \pmod{4}$. We set $k=\frac{n+p-1}{2}$. Then there are modular
forms $F_k^{(n+1)}\in M_k(\Gamma_{n+1})_{\mathbb{Z}_{(p)}}$, $F_k^{(n)}\in M_k(\Gamma_n)_{\mathbb{Z}_{(p)}}$
satisfying $\Phi(F_k^{(n+1)})=F_k^{(n)}$ and
\vspace{2mm}
\\
{\rm (F-I)}\quad $a(F_k^{(n+1)},T) \equiv 0 \pmod{p}$ for all $T\in\Lambda_{n+1}^+$,
\vspace{1mm}
\\
{\rm (F-II)}\quad $a(F_k^{(n)},T) \equiv 0 \pmod{p}$ for $T\in\Lambda_n^+$ with ${\rm det}(T)\not\equiv 0 \pmod{p}$.
\section{Siegel modular forms}
\subsection{Siegel modular forms and their Fourier expansions}
\label{sec2.1}
{\rm
Let $M_k(\Gamma_n)$ be the space of modular forms of weight $k$ for the Siegel modular
group $\Gamma_n:=\text{Sp}_n(\mathbb{Z})$. Each $F\in M_k(\Gamma_n)$ admits a Fourier
expansion of the form
$$
F(Z)=\sum_{0\leq T\in\Lambda_n}a(F,T)q^T,\;\; q^T:=\text{exp}(2\pi\sqrt{-1}\text{tr}(TZ)),\;\;
Z\in\mathbb{H}_n,
$$
where $\mathbb{H}_n$ is the Siegel upper-half space of degree $n$ and
$$
\Lambda_n:=\{\,T=(t_{ij})\in \text{Sym}_n(\mathbb{Q})\,\mid \, t_{ii},\,2t_{ij} 
\in \mathbb{Z}\,\}.
$$
Denote by $\Lambda_n^+$ the subset consisting the positive elements of $\Lambda_n$.
\vspace{1mm}
\\
For a modular form $F\in M_k(\Gamma_n)$, the {\it Siegel} $\Phi$-{\it operator} is defined
by
$$
\Phi(F)(Z_1):=\lim_{\lambda\to+\infty}F\left(\begin{pmatrix}Z_1 & 0 \\ 0 & \sqrt{-1}\lambda \end{pmatrix}\right),
\quad Z_1\in\mathbb{H}_{n-1}.
$$
The operator $\Phi$ induces a linear map
$$
\Phi\,:\, M_k(\Gamma_n) \longrightarrow\,M_k(\Gamma_{n-1}).
$$
An element of the space $S_k(\Gamma_n):=\text{Ker}\,\Phi$ is called a {\it cusp form}.

For the Fourier coefficients, the following relationship holds:
$$
a(\Phi(F),T_1)=a\left(F,{\scriptsize \begin{pmatrix}T_1 & 0 \\ 0 & 0 \end{pmatrix}}\right)\quad
\text{for}\quad T_1\in\Lambda_{n-1}.
$$
For a subring $R\subset \mathbb{C}$, we set
$$
M_k(\Gamma_n)_R=\{\,F\in M_k(\Gamma_n)\,\mid \,a(F,T)\in R\;\text{for}\;\text{all}\;T\in\Lambda_n\;\}. 
$$
For a prime number $p$, we denote
by $\mathbb{Z}_{(p)}$ the local ring consisting of $p$-integral rational numbers.
}
\subsection{Theta operator}
\label{sec2.2}
{\rm
\label{ThetaOp}
For an element $F\in M_k(\Gamma_n)$, we define
$$
\varTheta\,:\, F=\sum a(F,T)q^T\, \longmapsto\, \varTheta(F):=\sum a(F,T)\cdot\text{det}(T)q^T
$$
and call it the {\it theta operator}. It should be noted that $\varTheta(F)$ is not necessarily a
Siegel modular form. For a modular form $F\in M_k(\Gamma_n)_{\mathbb{Z}_{(p)}}$, it happens that
$$
\varTheta(F) \equiv 0 \pmod{p}.
$$
In this case, we say that the modular form $F$ is an element of the {\it mod $p$ kernel
of the theta operator}. The Fourier coefficients $a(F,T)$ of such a form $F$ satisfies the following
condition:
$$
a(F,T) \equiv 0 \pmod{p}\;\text{for}\;T\in\Lambda_n^+\;\text{with}\;\text{det}(T)\not\equiv 0\pmod{p}.
$$
A modular form $F\in M_k(\Gamma_n)_{\mathbb{Z}_{(p)}}$ is called {\it mod $p$ singular}
if it satisfies
$$
a(F,T) \equiv 0 \pmod{p}\;\text{for}\;\text{all}\;T\in\Lambda_n^+.
$$
Of course, a mod $p$ singular modular form $F$ satisfies $\varTheta(F) \equiv 0 \pmod{p}$.
}
\subsection{Siegel Eisenstein series}
\label{SE}
{\rm
Let
$$
\Gamma_{n,\infty}:=\left\{ {\scriptsize \begin{pmatrix}A&B\\C&D \end{pmatrix}\in\Gamma_n\;\mid \;C=0_n }                        \; \right\}.
$$
For an even integer $k>n+1$, the {\it Siegel Eisenstein series of weight $k$} is
defined by
$$
E_k^{(n)}(Z):=\sum_{\binom{*\,*}{C\,D}\in\Gamma_{n,\infty}\backslash\Gamma_n}
\text{det}(CZ+D)^{-k},\quad Z\in\mathbb{H}_n.
$$
It is known that $E_k^{(n)}\in M_k(\Gamma_n)_{\mathbb{Q}}$ and
$$
\Phi(E_k^{(n)})=E_k^{(n-1)}.
$$
For $T\in\Lambda_n^+$\,($n$:\,even), we denote by
$\chi_T$ the primitive Dirichlet character corresponding the extension
$$
\mathbb{Q}\Big{(}\sqrt{ (-1)^{n/2}\text{det}(2T)}\,\Big{)}/\mathbb{Q},
$$
and further denote the conductor of $\chi_T$ by $f_T$.
\vspace{1mm}
\\
The following result is due to B\"{o}cherer.
\begin{Thm} {\rm (B\"{o}cherer \cite{B2})}
\label{Bo1}
  {\it
{\rm (1)}\; If $n$ is even, then for $T\in\Lambda_n^+$,
$$
a(E_k^{(n)},T)=2^n\cdot\frac{k}{B_{k}}\cdot
\prod_{i=1}^{(n-2)/2}\frac{k-i}{B_{2k-2i}}\cdot\frac{B_{k-\frac{n}{2},\chi_T}}{B_{2k-n}}\cdot b_k^{(n)}(T)
$$
for some $b_k^{(n)}(T)\in\mathbb{Z}$.
\vspace{1mm}
\\
{\rm (2)}\; If $n$ is odd, then for  $T\in\Lambda_n^+$,
$$
a(E_k^{(n)},T)=2^n\cdot\frac{k}{B_k}\cdot\prod_{i=1}^{(n-1)/2}\frac{k-i}{B_{2k-2i}}\cdot c_k^{(n)}(T)
$$
for some $c_k^{(n)}(T)\in\mathbb{Z}$.
\vspace{1mm}
\\
Here $B_m$\,{\rm (}resp. $B_{m,\chi}${\rm )} is the $m$-th Bernoulli {\rm (}resp. 
generalized Bernoulli{\rm )} number.}
\end{Thm}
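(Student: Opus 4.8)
The plan is to reduce the statement to the explicit evaluation of the Fourier coefficients of $E_k^{(n)}$ in terms of the local \emph{Siegel series}. For $T\in\Lambda_n^+$ the coefficient $a(E_k^{(n)},T)$ is, by the classical computation going back to Siegel and worked out in general through the local representation densities of Kitaoka and Katsurada, a product of a single global ``transcendental'' factor---built from special values of the Riemann zeta function and, in the even case, of the Dirichlet $L$-function attached to $\chi_T$---together with a finite product over primes $p$ of values of the local Siegel series $b_p(T,X)$. First I would write this expansion down precisely, isolating the global factor in the shape
$$
a(E_k^{(n)},T)=\frac{(\text{const})}{\zeta(1-k)\,\prod_{i}\zeta\bigl(1-(2k-2i)\bigr)}\cdot(\text{local product}),
$$
with the additional quotient $L\bigl(1-(k-\tfrac n2),\chi_T\bigr)/\zeta(1-(2k-n))$ inserted in the even case in place of the would-be middle zeta factor. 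The even/odd dichotomy in the statement reflects exactly whether the genus invariant $(-1)^{n/2}\det(2T)$ enters through a genuine quadratic character $\chi_T$.

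Second, I would convert every zeta- and $L$-value occurring in the prefactor into a Bernoulli number by means of $\zeta(1-m)=-B_m/m$ and $L(1-m,\chi)=-B_{m,\chi}/m$. Substituting $m=k$ produces the factor $k/B_k$; substituting $m=2k-2i$ for $1\le i\le\lfloor(n-1)/2\rfloor$ produces $\prod_i (k-i)/B_{2k-2i}$ after absorbing the resulting powers of $2$ and signs into the overall constant $2^n$; and in the even case the pair $L\bigl(1-(k-\tfrac n2),\chi_T\bigr)$, $\zeta(1-(2k-n))$ yields the quotient $B_{k-n/2,\chi_T}/B_{2k-n}$ (here $2k-n=2(k-\tfrac n2)$, so the factor $2$ is again absorbed). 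This accounts for the full explicit prefactor displayed in both (1) and (2), leaving behind a single factor---named $b_k^{(n)}(T)$ in the even case and $c_k^{(n)}(T)$ in the odd case---equal to the finite local product times a bounded power of $2$.

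Third, and this is where the real work lies, I would prove that the remaining factor is a rational \emph{integer}. Here I would invoke the explicit polynomial description and the functional equation of the local Siegel series due to Katsurada: for each $p$ the series $b_p(T,X)$ is a polynomial in $X$ whose ``universal'' part cancels precisely the zeta-denominators already extracted, so that the surviving local contribution is $p$-integral, and only the primes dividing $\det(2T)$---together with the conductor $f_T$ in the even case---contribute nontrivially. Consequently the global product is a finite product of $p$-integers and hence lies in $\mathbb{Z}$. The main obstacle I anticipate is exactly this integrality bookkeeping: one must track, prime by prime, how the Bernoulli and $L$-denominators interact with the local polynomials and verify \emph{uniformly in $T$} that no denominator survives. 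This is the step at which the fine structure of the Siegel series established by Kitaoka and Katsurada is indispensable, and it is what turns the raw ratio of special values into the clean factorization asserted by B\"ocherer.
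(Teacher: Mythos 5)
You should first be aware that the paper contains no proof of this statement: it is quoted verbatim as a known theorem of B\"ocherer, with a citation to his 1984 \emph{manuscripta mathematica} article, and is used in the paper purely as a black box. So there is no internal argument to compare yours against; the relevant comparison is with B\"ocherer's original proof, which (like your sketch) proceeds through Siegel's formula expressing $a(E_k^{(n)},T)$ as an archimedean factor times a product of local representation densities, rewritten via the local Siegel series. Your first two steps --- isolating the global factor $\zeta(1-k)^{-1}\prod_i\zeta(1-(2k-2i))^{-1}$ with the quotient $L(1-(k-\tfrac n2),\chi_T)/\zeta(1-(2k-n))$ replacing the middle zeta factor in the even case, and then converting everything to Bernoulli numbers via $\zeta(1-m)=-B_m/m$ and $L(1-m,\chi)=-B_{m,\chi}/m$ --- are correct and account exactly for the displayed prefactors, including the bookkeeping that $2k-n=2(k-\tfrac n2)$ produces the power of $2$.

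The genuine gap is your third step, which is not an argument but an announcement of one. The entire content of the theorem is the assertion that the residual factor $b_k^{(n)}(T)$ (resp.\ $c_k^{(n)}(T)$) is a rational \emph{integer}; the prefactor identity by itself is an empty normalization, since one could always define the residual factor as the quotient. You correctly identify that this requires showing, prime by prime, that the denominators introduced by the zeta- and $L$-values are cancelled by the local Siegel series, but you do not carry out that cancellation, and you appeal to Katsurada's explicit polynomial formula for the local series, which postdates B\"ocherer's theorem by some fifteen years (B\"ocherer's own integrality argument rests on Kitaoka-style analysis of the local densities and on Feit/Siegel-type functional equations available at the time). There is also a small logical slip in your closing sentence: a finite product in which each factor is merely $p$-integral for \emph{its own} prime $p$ need not be an integer; what one actually shows is that each local factor is itself an integer (a $\mathbb{Z}$-coefficient polynomial evaluated at an appropriate power of $p$), whence the product is. As it stands your proposal is a correct and standard \emph{plan} for proving B\"ocherer's theorem, but the decisive integrality step remains to be supplied.
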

}
\subsection{Mod $p$ kernel of theta operator}
\label{ThetaKer}
{\rm
We quote some results from \cite{N1} and \cite{N-T2} for examples of modular forms 
which are contained in the mod $p$ kernel of the theta operator.
\vspace{1mm}
\\
First, we assume that {\it $n$ is even}. We recall the formula for $a(E_k^{(n)},T)$
in Theorem \ref{Bo1}, (1) and set
$$
\alpha_p(n,k):=\text{ord}_p\left(\frac{k}{B_{k}}\cdot\prod_{i=1}^{(n-2)/2}\frac{k-i}{B_{2k-2i}}\right),
$$
where $\text{ord}_p(a)$ is the $p$-order of $a\in\mathbb{Q}$.
\\
The following theorem is a main result in \cite{N1}.
\begin{Thm}
\label{Even}
{\rm (\cite[Theorem 3.1]{N1})}\quad {\it Assume that $n$ is an even positive integer. Let $p$ be a
prime number satisfying $p>n+3$ and $p \equiv (-1)^{n/2} \pmod{4}$. If we set
$$
G_k^{(n)}:=p^{-\alpha_p(n,k)}E_k^{(n)}\qquad \left(k=\frac{n+p-1}{2}\right),
$$
then $G_k^{(n)}\in M_k(\Gamma_n)_{\mathbb{Z}_{(p)}}$ and
$$
\varTheta(G_k^{(n)}) \equiv 0 \pmod{p}.
$$
}
\end{Thm}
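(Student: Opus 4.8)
The plan is to feed $k=\frac{n+p-1}{2}$ into Böcherer's formula (Theorem \ref{Bo1}(1)) and reduce the whole statement to a $p$-adic estimate for two Bernoulli factors. With this $k$ one has $k-\frac n2=\frac{p-1}{2}$ and $2k-n=p-1$, so for $T\in\Lambda_n^+$
\[
a(E_k^{(n)},T)=2^n\cdot\frac{k}{B_k}\prod_{i=1}^{(n-2)/2}\frac{k-i}{B_{2k-2i}}\cdot\frac{B_{\frac{p-1}{2},\chi_T}}{B_{p-1}}\cdot b_k^{(n)}(T).
\]
By the very definition of $\alpha_p(n,k)$ the product $p^{-\alpha_p(n,k)}\cdot\frac{k}{B_k}\prod_{i}\frac{k-i}{B_{2k-2i}}$ is a $p$-adic unit, and since $p>n+3$ both $2^n$ and the integer $b_k^{(n)}(T)$ are $p$-integral. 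Hence for $T\in\Lambda_n^+$
\[
\mathrm{ord}_p\,a(G_k^{(n)},T)=\mathrm{ord}_p B_{\frac{p-1}{2},\chi_T}-\mathrm{ord}_p B_{p-1}+\mathrm{ord}_p b_k^{(n)}(T),
\]
so everything is governed by the $p$-orders of $B_{\frac{p-1}{2},\chi_T}$ and $B_{p-1}$.

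First I would record that, by the theorem of von Staudt--Clausen, $\mathrm{ord}_p B_{p-1}=-1$; thus $-\mathrm{ord}_p B_{p-1}=+1$ contributes a guaranteed factor of $p$ to every positive-definite coefficient. In view of the displayed formula the two assertions then reduce to a single statement about $B_{\frac{p-1}{2},\chi_T}$: the inclusion $G_k^{(n)}\in M_k(\Gamma_n)_{\mathbb{Z}_{(p)}}$ needs $\mathrm{ord}_p B_{\frac{p-1}{2},\chi_T}\ge -1$ for all $T$, while $\varTheta(G_k^{(n)})\equiv 0\pmod p$ needs $a(G_k^{(n)},T)\equiv 0\pmod p$ only when $\det(T)\not\equiv 0\pmod p$ (the coefficients with $\det(T)\equiv 0$ being annihilated by the factor $\det(T)$ in the definition of $\varTheta$), i.e. $\mathrm{ord}_p B_{\frac{p-1}{2},\chi_T}\ge 0$ on that range.

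The heart of the argument is the $p$-order of the half-index generalized Bernoulli number $B_{\frac{p-1}{2},\chi_T}$. Here the hypothesis $p\equiv(-1)^{n/2}\pmod 4$ plays its essential role: since $T$ is positive definite, $\chi_T(-1)=(-1)^{n/2}=(-1)^{(p-1)/2}$, so $\chi_T$ has exactly the parity for which $B_{\frac{p-1}{2},\chi_T}$ need not vanish. To pin down its $p$-order I would pass to the Kubota--Leopoldt $p$-adic $L$-function. Writing $\omega$ for the Teichm\"uller character modulo $p$ and setting $\eta:=\chi_T\,\omega^{(p-1)/2}$, the interpolation formula gives
\[
L_p\!\Big(1-\tfrac{p-1}{2},\eta\Big)=-\Big(1-\chi_T(p)\,p^{(p-3)/2}\Big)\frac{B_{\frac{p-1}{2},\chi_T}}{(p-1)/2},
\]
because $\eta\,\omega^{-(p-1)/2}=\chi_T$. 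One checks $\eta$ is an even character, and when $\det(T)\not\equiv 0\pmod p$ (so $p\nmid f_T$) it is moreover nontrivial; hence $L_p(s,\eta)$ is holomorphic with $p$-integral values. As $p>n+3$ the Euler factor $1-\chi_T(p)p^{(p-3)/2}$ and the index $(p-1)/2$ are $p$-units, so $B_{\frac{p-1}{2},\chi_T}$ is $p$-integral on this range, giving the $\varTheta$-kernel assertion. For the remaining $T$ (where $p\mid f_T$) the same formula, together with the generalized von Staudt--Clausen theorem of Carlitz, shows that a denominator can occur only in the exceptional case $\chi_T=\big(\tfrac{\cdot}{p}\big)$ (i.e. $\eta$ trivial), and then only to the first power, so $\mathrm{ord}_p B_{\frac{p-1}{2},\chi_T}\ge -1$ always; this yields $p$-integrality of every positive-definite coefficient of $G_k^{(n)}$.

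Finally I would dispose of the Fourier coefficients indexed by $T$ of rank $r<n$, which are needed for the full integrality $G_k^{(n)}\in M_k(\Gamma_n)_{\mathbb{Z}_{(p)}}$: using $\Phi(E_k^{(n)})=E_k^{(n-1)}$ and the relation $a(\Phi(F),T_1)=a(F,\mathrm{diag}(T_1,0))$, these equal lower-degree Eisenstein coefficients, and one checks $\alpha_p(r,k)\ge\alpha_p(n,k)$ so that the normalizing power $p^{-\alpha_p(n,k)}$ still clears their denominators; the constant term is $p^{-\alpha_p(n,k)}$, which is $p$-integral because $\alpha_p(n,k)\le 0$ (all the numerators $k,k-i$ are $p$-units and all the $B_k,B_{2k-2i}$ are $p$-integral for this $k$). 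I expect the main obstacle to be the uniform control of $\mathrm{ord}_p B_{\frac{p-1}{2},\chi_T}$ as $T$ ranges over $\Lambda_n^+$ --- that is, isolating exactly when the generalized Bernoulli number acquires a $p$ in its denominator and checking this is governed by $\det(T)\bmod p$; the parity constraint $p\equiv(-1)^{n/2}\pmod 4$ is precisely what aligns $\chi_T$ with the nonvanishing, $p$-integral case.
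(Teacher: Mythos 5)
The paper itself does not prove this statement: Theorem \ref{Even} is imported verbatim from \cite[Theorem 3.1]{N1}, so there is no internal proof to compare against. Judged on its own, your argument is essentially correct and follows exactly the strategy the paper deploys for its Theorems \ref{Th1} and \ref{Th2}: specialize B\"ocherer's formula (Theorem \ref{Bo1}(1)) at $k=\frac{n+p-1}{2}$ so that $2k-n=p-1$ and $k-\frac n2=\frac{p-1}{2}$, absorb the unit $p^{-\alpha_p(n,k)}\cdot\frac{k}{B_k}\prod_i\frac{k-i}{B_{2k-2i}}$, extract the factor $p$ from $B_{p-1}^{-1}$ via von Staudt--Clausen, and control $B_{\frac{p-1}{2},\chi_T}$ by the location of $p$ in its denominator. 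Two remarks. First, the detour through Kubota--Leopoldt is unnecessary and somewhat circular: Carlitz's Theorem 3 \cite{C} (the very reference the paper uses for the analogous step in the proof of (2-II) and (2-III)) already says that $p$ divides the denominator of $B_{m,\chi_T}$ only when $f_T=p$ and $m$ is an odd multiple of $\frac{p-1}{2}$, and then only to the first power; this gives both $\mathrm{ord}_p B_{\frac{p-1}{2},\chi_T}\ge 0$ when $p\nmid\det(2T)$ and $\ge -1$ in general, which is all you need. Second, your treatment of the degenerate Fourier coefficients (rank $r<n$) is only sketched: for even $r$ the lower-degree B\"ocherer formula carries an extra factor $B_{k-\frac r2,\chi_{T'}}/B_{2k-r}$, and one must check that $k-\frac r2=\frac{p-1}{2}+\frac{n-r}{2}$ is neither an odd multiple of $\frac{p-1}{2}$ nor a multiple of $p-1$ for $0<r<n$ (this uses $p>n+3$), and that $\frac{1}{B_{2k-r}}$ is accounted for inside the partial product defining $\alpha_p(r,k)$; these checks do go through, and your inequality $\alpha_p(r,k)\ge\alpha_p(n,k)$ together with $\alpha_p(n,k)\le 0$ then closes the integrality claim. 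With those details filled in, the proof is complete and, as far as one can tell, coincides with the argument of \cite{N1}.
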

Secondly, we assume that {\it $n$ is odd} and set
$$
\beta_p(n,k):=\text{ord}_p\left(\frac{k}{B_{k}}\cdot\prod_{i=1}^{(n-1)/2}\frac{k-i}{B_{2k-2i}}\right)=\alpha_p(n+1,k).
$$
In this case, we have the following partial result.
\begin{Thm}
\label{Odd}
{\rm (\cite[Theorem 2.4]{N-T2})}\quad {\it Let $n$ be a positive integer such that $n \equiv 3 \pmod{8}$.
Assume that $p>n$ is a prime number. If we set
$$
H_k^{(n)}:=p^{-\beta_p(n,k)}E_k^{(n)}\qquad \left(k=\frac{n+2p-1}{2}\right),
$$
then $H_k^{(n)}\in M_k(\Gamma_n)_{\mathbb{Z}_{(p)}}$ and
$$
\varTheta(H_k^{(n)}) \equiv 0 \pmod{p}.
$$
}
\end{Thm}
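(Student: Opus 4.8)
The plan is to read off both assertions from Böcherer's explicit formula (Theorem \ref{Bo1}, (2)) together with a von Staudt--Clausen/Kummer analysis of the Bernoulli factors at the distinguished weight $k=\frac{n+2p-1}{2}$. Write $U=\frac{k}{B_k}\prod_{i=1}^{(n-1)/2}\frac{k-i}{B_{2k-2i}}$ for the $T$-independent ``universal'' factor, so that $a(E_k^{(n)},T)=2^n\,U\,c_k^{(n)}(T)$ with $c_k^{(n)}(T)\in\mathbb{Z}$ and $\beta_p(n,k)=\mathrm{ord}_p(U)$. Since $p>n$, the integer $2^n$ is a $p$-unit and $p^{-\beta_p(n,k)}U$ is a $p$-adic unit by the very definition of $\beta_p$; hence $a(H_k^{(n)},T)=2^n\,(p^{-\beta_p(n,k)}U)\,c_k^{(n)}(T)\in\mathbb{Z}_{(p)}$, which proves $H_k^{(n)}\in M_k(\Gamma_n)_{\mathbb{Z}_{(p)}}$, and moreover $\mathrm{ord}_p\,a(H_k^{(n)},T)=\mathrm{ord}_p\,c_k^{(n)}(T)$. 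Since $\varTheta(H_k^{(n)})\equiv0\pmod p$ is equivalent to $a(H_k^{(n)},T)\equiv0\pmod p$ for every $T\in\Lambda_n^+$ with $p\nmid\det(T)$ (the boundary and the $p\mid\det T$ cases being killed by the factor $\det T$ in $\varTheta$), everything reduces to the single arithmetic claim
$$
p\mid c_k^{(n)}(T)\qquad\text{for all }T\in\Lambda_n^+\text{ with }p\nmid\det(2T).
$$

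To prove this I would feed Katsurada's explicit Siegel-series evaluation into $c_k^{(n)}(T)$ and exploit that the chosen weight places the relevant Bernoulli indices at the exceptional value $p-1$: indeed $k-\tfrac{n+1}{2}=p-1$ and $2k-n-1=2(p-1)$. The guiding picture is the even degree $n+1$: since $\beta_p(n,k)=\alpha_p(n+1,k)$ and $\Phi(E_k^{(n+1)})=E_k^{(n)}$, the form $H_k^{(n)}$ is the $\Phi$-image of the normalized degree-$(n+1)$ Eisenstein series, whose positive-definite coefficients carry the generalized Bernoulli factor $B_{p-1,\chi_S}/B_{2(p-1)}$. By von Staudt--Clausen $\mathrm{ord}_p B_{2(p-1)}=-1$, while $B_{p-1,\chi_S}$ is $p$-integral exactly when $\chi_S$ is nontrivial; thus this ratio acquires a factor of $p$ precisely when $(-1)^{(n+1)/2}\det(2S)$ is a nonsquare. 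I would transport this divisibility to the odd-degree coefficients through Katsurada's recursion relating the rank-$n$ and rank-$(n+1)$ local densities, so that the hypothesis $p\nmid\det(2T)$ forces the $p$-adic local factor into the ``nontrivial character'' regime and contributes the required factor $p$ to $c_k^{(n)}(T)$. Note that the $\Phi$-relation alone does not suffice, since $a(E_k^{(n)},T)$ is a degenerate boundary coefficient of degree $n+1$, so the positive-definite even-degree formula does not apply directly.

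The hypothesis $n\equiv3\pmod 8$ is what I expect to be the genuine pressure point, and it should enter only through the $2$-adic and sign data: it fixes $\tfrac{n+1}{2}\equiv2\pmod 4$, hence $(-1)^{(n+1)/2}=+1$, and controls the Hasse/discriminant invariant of a $p$-unimodular form of rank $n$ so that the quadratic character attached to $\det(2T)$ falls in the nontrivial regime for every admissible $T$. This is precisely the step that fails for the other odd residue classes modulo $8$, which is why the statement is only a partial result; tracking the local Siegel series at the prime $2$ and verifying that it never cancels the factor of $p$ produced above is where the real work lies.

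Finally I would assemble the pieces: combine the $p$-unit evaluation of $2^nU$ after removing $p^{\beta_p(n,k)}$, the divisibility $p\mid c_k^{(n)}(T)$ for $p\nmid\det(2T)$, and the automatic vanishing of $\varTheta$ on coefficients with $p\mid\det T$, to conclude $\varTheta(H_k^{(n)})\equiv0\pmod p$. The main obstacle is not the Bernoulli bookkeeping, which is forced by the exceptional index $p-1$, but the uniform control, over all $p$-unimodular $T\in\Lambda_n^+$, of Katsurada's local factors at $p$ and at $2$, that is, showing that the nontrivial-character scenario genuinely holds for every such $T$ under the congruence $n\equiv3\pmod 8$.
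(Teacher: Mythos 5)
You should first note that the paper contains no proof of this statement: Theorem \ref{Odd} is quoted verbatim from \cite[Theorem 2.4]{N-T2}, so the only meaningful comparison is with the argument in that reference. Your preliminary reductions are sound and agree with it: by definition $\beta_p(n,k)=\mathrm{ord}_p(U)$, so $p^{-\beta_p(n,k)}U$ is a $p$-unit and $\mathrm{ord}_p\,a(H_k^{(n)},T)=\mathrm{ord}_p\,c_k^{(n)}(T)$ for $T\in\Lambda_n^+$; the identities $k-\frac{n+1}{2}=p-1$ and $2k-n-1=2(p-1)$ are correct; and reducing $\varTheta(H_k^{(n)})\equiv 0\pmod p$ to the single claim $p\mid c_k^{(n)}(T)$ whenever $p\nmid\det(2T)$ is exactly the right move. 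A minor gap already here: membership in $M_k(\Gamma_n)_{\mathbb{Z}_{(p)}}$ requires $p$-integrality of \emph{all} Fourier coefficients, while Theorem \ref{Bo1} and your unit computation cover only $T\in\Lambda_n^+$; you still need $\mathrm{ord}_p\,a(E_k^{(r)},T_r)\geq\beta_p(n,k)$ for the degenerate coefficients of each rank $r<n$, which amounts to comparing the degree-$r$ universal Bernoulli factors with $\beta_p(n,k)$.

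The serious gap is that the central divisibility $p\mid c_k^{(n)}(T)$ is never established: your second and third paragraphs are an explicit program (``I would feed\ldots'', ``where the real work lies''), and the engine you propose --- importing the factor $B_{p-1,\chi_S}/B_{2(p-1)}$ from nondegenerate degree-$(n+1)$ coefficients via a rank recursion for local densities --- is not how the divisibility arises, for the obstruction you yourself record: a positive definite $T$ of rank $n$ appears in degree $n+1$ only as a degenerate coefficient, where the character factor is absent, and no recursion in your outline actually produces a factor of $p$ on $c_k^{(n)}(T)$. The proof in \cite{N-T2} stays entirely in degree $n$ and rests on Katsurada's functional equation $X\mapsto q^{-n-1}X^{-1}$ for the normalized local Siegel series $\widetilde{F}_q(T,X)$ of odd rank. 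The whole point of the weight $k=\frac{n+2p-1}{2}$ is that for every prime $q\neq p$ one has $q^{\,k-n-1}=q^{\,p-\frac{n+3}{2}}\equiv q^{-\frac{n+1}{2}}\pmod p$, i.e.\ the evaluation point is, mod $p$, the fixed point of the functional equation; hence, up to a monomial factor that is $\equiv 1$ there, $\widetilde{F}_q(T,q^{k-n-1})\equiv\eta_q(T)\,\widetilde{F}_q(T,q^{k-n-1})\pmod p$, so any prime with local sign $\eta_q(T)=-1$ forces $\widetilde{F}_q(T,q^{k-n-1})\equiv 0\pmod p$. The hypothesis $n\equiv 3\pmod 8$ enters exactly once, through the product formula for the signs $\eta_q(T)$ (essentially Hasse-invariant data): for positive definite $T$ of rank $n\equiv 3\pmod 8$ the global product is $-1$, so some prime $q_0$ has $\eta_{q_0}(T)=-1$; and since $p\nmid\det(2T)$ makes $T$ unimodular over $\mathbb{Z}_p$, with $\widetilde{F}_p=1$ and $\eta_p(T)=+1$, necessarily $q_0\neq p$, so the vanishing local factor is one of the integer factors of $c_k^{(n)}(T)$ and $p\mid c_k^{(n)}(T)$. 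Your instinct that $n\equiv 3\pmod 8$ is a sign-theoretic condition tied to Hasse invariants and the prime $2$ is correct, but without the fixed-point identity $q^{k-n-1}\equiv q^{-(n+1)/2}\pmod p$ and the resulting forced vanishing of a local Siegel series, the proposal has no mechanism that produces the required factor of $p$.
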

\begin{Rem}
It should be noted that the above theorem does not hold for all odd $n$.
\end{Rem}
}
\section{$p$-divisibility for Siegel Eisenstein series}
\label{Eisen}
\subsection{Siegel Eisenstein series of weight $\frac{p+1}{2}$}
\label{Eisen1}
{\rm
In this section, we consider the Siegel Eisenstein series of weight $\frac{p+1}{2}$
in relation to the $p$-divisibility of the Fourier coefficients.

We have the following result.
\begin{Thm}
\label{Th1}
{\it Let $p$ be a prime number satisfying $p>7$ and $p \equiv 3 \pmod{4}$.
Then we have
\vspace{2mm}
\\
{\rm (1-I)}\quad $a(E^{(3)}_{\frac{p+1}{2}},T) \equiv 0 \pmod{p}$ for all $T\in \Lambda_3^+$,
\vspace{2mm}
\\
{\rm (1-II)}\quad $a(E^{(2)}_{\frac{p+1}{2}},T) \equiv 0 \pmod{p}$ for $T\in \Lambda_2^+$ with
${\rm det}(T) \not\equiv 0 \pmod{p}$,
\vspace{2mm}
\\
{\rm (1-III)}\quad  $a(E^{(1)}_{\frac{p+1}{2}},t) \equiv 0 \pmod{p}$ for $t\in \mathbb{Z}_{>0}$ with
                   $\chi_{-p}(t)=-1$, where $\chi_{-p}(*)=\left(\frac{-p}{*}\right)$ is the Kronecker
                   symbol.}
\end{Thm}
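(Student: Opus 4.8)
The plan is to read off all three congruences from B\"ocherer's explicit formula (Theorem \ref{Bo1}) by computing the $p$-adic valuations of the Bernoulli factors that occur, once the single arithmetic input $\text{ord}_p(B_{(p+1)/2})=0$ is secured. Throughout I set $k=\frac{p+1}{2}$: the hypothesis $p\equiv 3\pmod 4$ makes $k$ even, so the Eisenstein series are defined, and $p>7$ guarantees $k>4=n+1$ for $n=3$, so that $E_k^{(3)}$, and hence $E_k^{(2)}=\Phi(E_k^{(3)})$ and $E_k^{(1)}=\Phi^2(E_k^{(3)})$, all exist. Note that $k-1=\frac{p-1}{2}$ and $2k-2=2k-n\big|_{n=2}=p-1$, and that by the von Staudt--Clausen theorem the divisibility $(p-1)\mid(p-1)$ forces $\text{ord}_p(B_{p-1})=-1$. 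This single factor $B_{p-1}$ sitting in a denominator is exactly what produces the extra power of $p$ in degrees $2$ and $3$.

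First I would isolate the key lemma $\text{ord}_p(B_{(p+1)/2})=0$, equivalently that $p$ does not divide the numerator of $B_k$. Since $(p-1)\nmid k$ for $p>3$, von Staudt--Clausen already gives $\text{ord}_p(B_k)\ge 0$, so the content is the non-vanishing $B_k\not\equiv 0\pmod p$. I would obtain this from the classical congruence expressing $B_{(p+1)/2}$ as a $p$-adic unit times the class number $h(-p)$ of $\mathbb{Q}(\sqrt{-p})$ (a Bernoulli--class-number relation of Ankeny--Artin--Chowla type, the $p$-unit involving only $2-\left(\frac{2}{p}\right)$ and small factors prime to $p$), together with the elementary bound $0<h(-p)<p$, which forces $p\nmid h(-p)$. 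This lemma is what makes the statements well posed: it shows $\text{ord}_p(k/B_k)=0$, so in degree $1$ the series $E_k^{(1)}$, whose $t$-th coefficient is $-\frac{2k}{B_k}\sigma_{k-1}(t)$, is genuinely $p$-integral, and in every degree the factor $k/B_k$ contributes nothing to the $p$-order. I expect this to be the main obstacle: without it the claimed congruences could even fail to make sense, and ruling out irregularity of $p$ at the index $\frac{p+1}{2}$ is precisely the delicate point.

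Granting the lemma, each part is a short valuation count. For (1-I), Theorem \ref{Bo1}(2) with $n=3$ gives $a(E_k^{(3)},T)=8\cdot\frac{k}{B_k}\cdot\frac{k-1}{B_{p-1}}\cdot c_k^{(3)}(T)$ with $c_k^{(3)}(T)\in\mathbb{Z}$; since $\text{ord}_p\bigl(\frac{k-1}{B_{p-1}}\bigr)=0-(-1)=1$ and all remaining factors are $p$-integral ($p$ odd, so $8$ and $c_k^{(3)}(T)$ do not contribute), every coefficient with $T\in\Lambda_3^+$ is divisible by $p$, i.e.\ $E_k^{(3)}$ is mod $p$ singular. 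For (1-II), Theorem \ref{Bo1}(1) with $n=2$ gives $a(E_k^{(2)},T)=4\cdot\frac{k}{B_k}\cdot\frac{B_{k-1,\chi_T}}{B_{p-1}}\cdot b_k^{(2)}(T)$; the hypothesis $\det(2T)\not\equiv 0\pmod p$ means $p\nmid f_T$, so $\chi_T$ is a nontrivial character of conductor prime to $p$, and since $\chi_T\ne\omega^{(p-1)/2}$ (the Teichm\"uller power $\omega^{(p-1)/2}$ having conductor $p$) the generalized Bernoulli number $B_{k-1,\chi_T}=B_{(p-1)/2,\chi_T}$ is $p$-integral; combined with $\text{ord}_p(1/B_{p-1})=1$ this yields divisibility by $p$. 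Equivalently, (1-II) is the specialization of Theorem \ref{Even} to $n=2$, whose hypotheses $p>5$ and $p\equiv -1\pmod 4$ hold and whose weight $\frac{n+p-1}{2}$ equals our $k$.

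Finally, (1-III) is purely elementary. With $k-1=\frac{p-1}{2}$, Euler's criterion gives $d^{(p-1)/2}\equiv\chi_{-p}(d)\pmod p$ for every $d$ (both sides vanishing when $p\mid d$), hence $\sigma_{k-1}(t)=\sum_{d\mid t}d^{(p-1)/2}\equiv\sum_{d\mid t}\chi_{-p}(d)\pmod p$. The right-hand side is multiplicative, equal to $\prod_{q^{a}\,\|\,t}\bigl(1+\chi_{-p}(q)+\cdots+\chi_{-p}(q)^a\bigr)$, and a factor vanishes as soon as some prime power $q^a$ exactly dividing $t$ has $\chi_{-p}(q)=-1$ with $a$ odd; but $\chi_{-p}(t)=\prod_{q^{a}\,\|\,t}\chi_{-p}(q)^a=-1$ forces at least one such factor, so $\sum_{d\mid t}\chi_{-p}(d)=0$ and $\sigma_{k-1}(t)\equiv 0\pmod p$. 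Since $\text{ord}_p(k/B_k)=0$ by the lemma, $a(E_k^{(1)},t)=-\frac{2k}{B_k}\sigma_{k-1}(t)\equiv 0\pmod p$ whenever $\chi_{-p}(t)=-1$. The three computations together realize the transposition picture of $\S\ref{sec1}$ along $E_k^{(3)}\xrightarrow{\ \Phi\ }E_k^{(2)}\xrightarrow{\ \Phi\ }E_k^{(1)}$, even though each divisibility assertion is proved by its own valuation count rather than deduced from the next via $\Phi$.
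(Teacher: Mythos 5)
Your proposal is correct and follows essentially the same route as the paper: B\"ocherer's formula plus the von Staudt--Clausen valuation $\mathrm{ord}_p(B_{p-1})=-1$ for (1-I), the non-vanishing $B_{(p+1)/2}\not\equiv 0\pmod p$ via the class-number congruence and $h(-p)<p$ as the key arithmetic input, and Euler's criterion applied to the multiplicative factorization of $\sigma_{(p-1)/2}(t)$ for (1-III). The only cosmetic difference is in (1-II), where you give a self-contained valuation count using Carlitz's integrality criterion for $B_{(p-1)/2,\chi_T}$ while the paper simply invokes Theorem~\ref{Even} with $\alpha_p(2,(p+1)/2)=0$ --- a route you also note, and which rests on the same computation.
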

\begin{proof}
(1-I):\;\; By Theorem \ref{Bo1}, we can write
$$
a(E^{(3)}_{\frac{p+1}{2}},T)=\frac{(p+1)\cdot  (p-1)}{B_{\frac{p+1}{2}}\cdot B_{p-1}}\cdot c_{1,p}(T)
$$
for some constant $c_{1,p}(T)\in\mathbb{Z}_{(p)}$. It is known that 
$B_{\frac{p+1}{2}}\not\equiv 0\pmod{p}$. In fact, $B_{\frac{p+1}{2}} \equiv -\frac{1}{2}h(-p) \pmod{p}$
holds for $p$ satisfying $p>3$ and $p \equiv 3 \pmod{4}$, where $h(-p)$ is the class number of
$\mathbb{Q}(\sqrt{-p})$
(see \cite{B-S}, Chap. 5,\, $\S$ 8, Problem 4).
On the other hand, from von Staudt-Clausen's theorem, $B_{p-1}^{-1} \equiv 0 \pmod{p}$.
Therefore,
$$
a(E^{(3)}_{\frac{p+1}{2}},T) \equiv 0 \pmod{p}\;\;\text{for}\;\;\text{all}\;\;T\in \Lambda_3^+.
$$
(1-II):\;\; We apply Theorem \ref{Even} to the case $n=2$ and $k=\frac{p+1}{2}$. Namely,
following the notation in $\S$ \ref{ThetaKer},
$$
G_{\frac{p+1}{2}}^{(2)}=p^{-\alpha_p(2,(p+1)/2)}E_{\frac{p+1}{2}}^{(2)}
$$
satisfies $\varTheta(G_{\frac{p+1}{2}}^{(2)}) \equiv 0 \pmod{p}$. 
Since $\alpha_p(2,(p+1)/2)=0$ in the present case, we obtain
$$
\varTheta(E^{(2)}_{\frac{p+1}{2}})=\varTheta(G_{\frac{p+1}{2}}^{(2)}) \equiv 0 \pmod{p}.
$$
This proves (1-II).
\vspace{1mm}
\\
(1-III):\;\; We can write
$$
a(E^{(1)}_{\frac{p+1}{2}},t)=-\frac{p+1}{B_{\frac{p+1}{2}}}\,\sigma_{\frac{p-1}{2}}(t),
$$
where $\sigma_{\frac{p-1}{2}}(t)=\sum_{0<d\mid t}d^{\frac{p-1}{2}}$. First we note that
$(p+1)/B_{\frac{p+1}{2}}\in\mathbb{Z}_{(p)}$. Next we shall show that
$$
\sigma_{\frac{p-1}{2}}(t) \equiv 0 \pmod{p}\;\;\text{if}\;\; \chi_{-p}(t)=-1.
$$
Assume that $\chi_{-p}(t)=-1$.
Let $\displaystyle t=\prod_{i=1}^mq_i^{e_i}$ be the prime decomposition of $t$. By assumption,
there is an integer $j$\,$(1\leq j\leq m)$ satisfying
$$
\chi_{-p}(q_j)=-1\quad\text{and}\quad e_j:\;\text{odd}.
$$
For this $j$,
$$
\sigma_{\frac{p-1}{2}}(t)=\prod_{i=1}^m\sum_{\ell=0}^{e_i}q_i^{\frac{p-1}{2}\ell}
\;\;\text{is}\;\text{divisible}\;\text{by}\;
1+q_j^{\frac{p-1}{2}}.
$$
(Since $e_j$ is odd, $\sum_{\ell=0}^{e_j}q_j^{\frac{p-1}{2}\ell}$ is divisible by $1+q_j^{\frac{p-1}{2}}$.)
By Euler's criterion,
$$
1+q_j^{\frac{p-1}{2}} \equiv 1+\left(\frac{q_j}{p}\right)=1+\chi_{-p}(q_j)=0 \pmod{p}.
$$
This implies that
$$
\sigma_{\frac{p-1}{2}}(t) \equiv 0 \pmod{p}\;\;\text{if}\;\;\chi_{-p}(t)=-1.
$$
This completes the proof of Theorem \ref{Th1}.
\end{proof}
}
\begin{Rem}
In the higher degree case $(n>3)$, we have the following result under some additional conditions
on $p$: Assume that $p>2n+1$ and $p \equiv 3 \pmod{4}$, and $p$ is {\it regular}. Then
$$
a(E_{\frac{p+1}{2}}^{(n)},T) \equiv 0 \pmod{p}\;\;\text{for}\;\;\text{all}\;\;T\in\Lambda_n^+.
$$
\end{Rem}
\begin{Rem}
\label{Remark3}
Let $p=23$ in the above theorem. We have
\vspace{1mm}
\\
(E-I)\quad $a(E^{(3)}_{12},T) \equiv 0 \pmod{23}$ for all $T\in \Lambda_3^+$,
\vspace{2mm}
\\
(E-II)\quad $a(E^{(2)}_{12},T) \equiv 0 \pmod{23}$ for $T\in \Lambda_2^+$ with
${\rm det}(T) \not\equiv 0 \pmod{23}$,
\vspace{2mm}
\\
(E-III)\quad  $a(E^{(1)}_{12},t) \equiv 0 \pmod{23}$ for $t\in \mathbb{Z}_{>0}$ with
                   $\chi_{-23}(t)=-1$.
\vspace{2mm}
\\
In the Introduction, it was noted that there are modular forms $\Delta$, $[\Delta]$, and $[\Delta]^{(3)}$
which behave similarly with respect to ``23-divisibility''. Details are given below.
\end{Rem}
\begin{Ex}\textbf{(Modular forms related to $\boldsymbol{\Delta}$)}
\label{Delta}
\quad In the Introduction, we mentioned the weight 12 modular forms $\Delta$, $[\Delta]$, and
$[\Delta]^{(3)}$ which have the following properties:
\vspace{2mm}
\\
(D-I)\; $a([\Delta]^{(3)},T) \equiv 0 \pmod{23}$ for all $T\in\Lambda_3^+$,
\vspace{2mm}
\\
(D-II)\; $a([\Delta],T) \equiv 0 \pmod{23}$ for $T\in\Lambda_2^+$ with $\text{det}(T)\not\equiv 0\pmod{23}$,
\vspace{2mm}
\\
(D-III)\; $a(\Delta,t)=\tau(t) \equiv 0 \pmod{23}$ for $t\in\mathbb{Z}_{>0}$ with $\chi_{-23}(t)=-1$.
\vspace{2mm}
\\
First, we review the properties of $\Delta$.
\vspace{1mm}
\\
The modular form $\Delta$ is defined by
$$
\Delta=q\prod_{n=1}^\infty(1-q^n)^{24},\quad q=e^{2\pi\sqrt{-1}z}.
$$
and has the Fourier expansion
$$
\Delta=\sum_{t=1}^\infty \tau(t)q^t\in S_{12}(\Gamma_1)_{\mathbb{Z}}.
$$
The function $\tau (t)$ is called the {\it Ramanujan $\tau$-function}. As mentioned in
the Introduction, $\tau(t)$ satisfies
$$
\tau(t) \equiv 0 \pmod{23}\;\;\text{if}\;\; \chi_{-23}(t)=-1\qquad (\text{Wilton \cite{W}}).
$$
This is none other than (D-III).
\vspace{2mm}
\\
In the case that $n=2$, we consider the Klingen Eisenstein series $[\Delta]$ lifted
from $\Delta$. From the construction method, we see that $\Phi([\Delta])=\Delta$ and 
$[\Delta]\in M_{12}(\Gamma_2)$ ($\Phi$:\,the Siegel $\Phi$-operator). Moreover, the following
fact was proved by B\"{o}cherer \cite{B1}:
$$
\varTheta([\Delta]) \equiv 0 \pmod{23}.
$$
This implies (D-II).
\vspace{2mm}
\\
Finally, we construct $[\Delta]^{(3)}$ concretely.
\vspace{1mm}
\\
We set
\begin{align*}
[\Delta]^{(3)}:=&c_1\cdot (E_4^{(3)})^3+c_2\cdot (E_6^{(3)})^2+c_3\cdot E_{12}^{(3)}+c_4\cdot F_{12},\\
                   & c_1=\frac{191\cdot 691}{2^4\cdot 3^3\cdot 5^3\cdot 7\cdot 337},\qquad
                      c_2=\frac{-277\cdot 691}{2^6\cdot 3^4\cdot 7^3\cdot 337}\\
                   & c_3=\frac{-131\cdot 593\cdot 691}{2^6\cdot 3^4\cdot 5^3\cdot 7^3\cdot 337}, \qquad
                      c_4=\frac{2^2\cdot 5^3\cdot 71\cdot 691}{7\cdot 337}.
\end{align*}
Here $E_k^{(3)}$ is the weight $k$ Siegel Eisenstein series of degree 3, and $F_{12}\in S_{12}(\Gamma_3)$
is a cusp form constructed by Miyawaki \cite{Mi}.

Function $[\Delta]^{(3)}$ has the following properties:
\begin{Prop}
\label{Delta3}
{\it 
{\rm (1)} \;$[\Delta]^{(3)}\in M_{12}(\Gamma_3)_{\frac{1}{7}\mathbb{Z}}$.
\qquad\qquad
{\rm (2)}\; $\Phi([\Delta]^{(3)})=[\Delta]$.
\vspace{1mm}
\\
{\rm (3)}\; $a([\Delta]^{(3)},T) \equiv 0 \pmod{23}$ for all $T\in \Lambda_3^+$.
\vspace{1mm}
\\
{\rm (4)}\; $[\Delta]^{(3)} \equiv \mathcal{G}_{12}^{(3)} \pmod{691}$ where
$\mathcal{G}_{12}^{(3)}:=-\frac{B_{12}}{24}\cdot E_{12}^{(3)}$.
}
\end{Prop}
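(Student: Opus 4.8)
The four claims fall into a structural pair (2), (4), a denominator bookkeeping (1), and the substantive arithmetic assertion (3); every summand $(E_4^{(3)})^3,(E_6^{(3)})^2,E_{12}^{(3)},F_{12}$ has weight $12$, so $[\Delta]^{(3)}\in M_{12}(\Gamma_3)$ is immediate and all the content lies in the Fourier coefficients. For (2) I would apply $\Phi$ term by term. Since $\Phi$ is compatible with products and $\Phi(E_k^{(3)})=E_k^{(2)}$, while $F_{12}\in S_{12}(\Gamma_3)$ lies in the kernel of $\Phi$, the cusp form drops out and
$$
\Phi([\Delta]^{(3)})=c_1(E_4^{(2)})^3+c_2(E_6^{(2)})^2+c_3E_{12}^{(2)}.
$$
It then remains to identify this degree-$2$ Eisenstein combination with the Klingen lift $[\Delta]$; since $M_{12}(\Gamma_2)$ is finite dimensional this is a finite verification, comparing $a(\,\cdot\,,T)$ up to a Sturm-type bound with the known coefficients of $[\Delta]$ (equivalently, applying $\Phi$ once more and checking $c_1(E_4^{(1)})^3+c_2(E_6^{(1)})^2+c_3E_{12}^{(1)}=\Delta$ in degree $1$, then resolving the residual cuspidal ambiguity in $S_{12}(\Gamma_2)$). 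Claim (1) is then pure denominator tracking: B\"{o}cherer's formula (Theorem \ref{Bo1}) together with the von Staudt--Clausen theorem controls the denominators of $a(E_k^{(3)},T)$ for $k=4,6,12$, and $F_{12}$ has controlled (essentially integral) Fourier coefficients, so one checks that the primes $2,3,5,337$ appearing in the denominators of the $c_i$ all cancel in the sum, leaving only the single allowed power of $7$.

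Claim (4) admits a clean mechanism. Each $c_i$ carries the factor $691$ in its numerator and has denominator prime to $691$, hence $c_i\equiv 0\pmod{691}$ in $\mathbb{Z}_{(691)}$. As $(E_4^{(3)})^3,(E_6^{(3)})^2,F_{12}$ are $691$-integral, those three terms vanish modulo $691$, and only $c_3E_{12}^{(3)}$ survives, precisely because $E_{12}^{(3)}$ carries $691$ in its denominator through the factor $B_{12}^{-1}$ in Theorem \ref{Bo1}. Writing $E_{12}^{(3)}=\frac{65520}{691}\mathcal{G}_{12}^{(3)}$ (using $-B_{12}/24=691/65520$), the asserted congruence $[\Delta]^{(3)}\equiv\mathcal{G}_{12}^{(3)}\pmod{691}$ collapses to the scalar identity $\frac{65520}{691}\,c_3\equiv 1\pmod{691}$, a direct numerical check in $\mathbb{Z}_{(691)}$.

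The genuine difficulty is (3). Here I would use that for $p=23$ one has $\frac{p+1}{2}=12$, so by Theorem \ref{Th1}(1-I) the Eisenstein series $E_{12}^{(3)}=E^{(3)}_{(p+1)/2}$ is already mod $23$ singular; as $c_3$ is a $23$-unit, the term $c_3E_{12}^{(3)}$ contributes $0\pmod{23}$ on every $T\in\Lambda_3^+$. The obstruction is the remaining combination $c_1(E_4^{(3)})^3+c_2(E_6^{(3)})^2+c_4F_{12}$, whose summands are manifestly not mod $23$ singular, so genuine cancellation must take place for every positive-definite $T$. My plan is to prove the stronger congruence $[\Delta]^{(3)}\equiv\lambda\,E_{12}^{(3)}\pmod{23}$ for some scalar $\lambda$ inside $M_{12}(\Gamma_3)\otimes\mathbb{Z}/23\mathbb{Z}$, whereupon singularity is inherited from $E_{12}^{(3)}$; an alternative is to identify $[\Delta]^{(3)}\bmod 23$ with a constant multiple of a Siegel theta series of rank $<3$ attached to the Leech lattice (cf. Example \ref{Leech}), whose positive-definite rank-$3$ coefficients reduce to $0$ modulo $23$. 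The main obstacle is exactly that ``mod $23$ singular'' ranges over the infinite set $\Lambda_3^+$, so no terminating Fourier-coefficient computation settles it; controlling the full positive-definite spectrum forces one to understand either the mod $23$ reduction of the space $M_{12}(\Gamma_3)$ or the relevant theta-series identity, and that is where the real effort lies.
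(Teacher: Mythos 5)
Your treatment of (2) and (4) matches the paper in substance: (2) is exactly the termwise application of $\Phi$ (the paper takes the identification of $c_1(E_4^{(2)})^3+c_2(E_6^{(2)})^2+c_3E_{12}^{(2)}$ with $[\Delta]$ as given by the construction), and your reduction of (4) to the scalar congruence $\frac{65520}{691}c_3\equiv 1\pmod{691}$ is equivalent to the paper's computation that the coefficient $c_3'$ of $E_{12}^{(3)}$ in $[\Delta]^{(3)}-\mathcal{G}_{12}^{(3)}$ is divisible by $691^2$. The genuine gap is (3), which you correctly single out as the substantive claim but do not prove. Of your two proposed routes, the first, $[\Delta]^{(3)}\equiv\lambda E_{12}^{(3)}\pmod{23}$ in $M_{12}(\Gamma_3)\otimes\mathbb{Z}/23\mathbb{Z}$, cannot work: applying $\Phi^2$ gives $\Delta\equiv\lambda E_{12}^{(1)}\pmod{23}$, and comparing constant terms forces $\lambda\equiv 0$, hence $\Delta\equiv 0\pmod{23}$, which is false. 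The second route is the paper's actual argument: it establishes $E_{12}^{(3)}+7\,[\Delta]^{(3)}\equiv\vartheta_{\mathcal{L}}^{(3)}\pmod{23}$, where $\vartheta_{\mathcal{L}}^{(3)}$ is the degree-$3$ theta series of the rank-$24$ Leech lattice (not a theta series of ``rank $<3$''), and then invokes the quoted result (L-I) that $a(\vartheta_{\mathcal{L}}^{(3)},T)\equiv 0\pmod{23}$ for all $T\in\Lambda_3^+$, together with (E-I) for $E_{12}^{(3)}$, which you already have from Theorem \ref{Th1}. Your worry that ``no terminating computation settles it'' is misplaced at the level of the congruence of forms: all three forms are written explicitly as linear combinations of $(E_4^{(3)})^3$, $Y_{12}^{(3)}$, $X_{12}^{(3)}$, $F_{12}$, so the congruence reduces to the four scalar checks $d_i+7e_i\equiv f_i\pmod{23}$; the infinitude of positive-definite $T$ is absorbed into the cited theorem on the Leech theta series, not handled by direct computation.

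A secondary gap is (1). B\"{o}cherer's formula bounds the denominator of each $a(E_k^{(3)},T)$ separately, but ``checking that the primes $2,3,5,337$ cancel in the sum'' is not a finite verification over the infinite set $\Lambda_3$, and you give no mechanism that forces the cancellation uniformly in $T$. The paper gets a structural identity instead: $[\Delta]^{(3)}=Y_{12}^{(3)}-\frac{2^4\cdot 3^2\cdot 5}{7}X_{12}^{(3)}+3\cdot 11\cdot 17\cdot 23\,F_{12}$ with $Y_{12}^{(3)},X_{12}^{(3)},F_{12}\in M_{12}(\Gamma_3)_{\mathbb{Z}}$, from which membership in $M_{12}(\Gamma_3)_{\frac{1}{7}\mathbb{Z}}$ is immediate. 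Some such re-expression in an integral basis is needed to make your denominator bookkeeping rigorous.
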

\begin{proof}
(1)\; Fact (1) follows
from the expression 
$[\Delta]^{(3)}=Y_{12}^{(3)}-\frac{2^4\cdot 3^2\cdot 5}{7}X_{12}^{(3)}+3\cdot 11\cdot 17\cdot 23\,F_{12}$, 
where $Y_{12}^{(3)}$ and $X_{12}^{(3)}$ are modular forms in $M_{12}(\Gamma_3)_{\mathbb{Z}}$ defined in
\cite[$\S$ 4.1]{N-T1}, and $F_{12}\in S_{12}(\Gamma_3)_{\mathbb{Z}}$.
\\
(2)\; Equality (2) is obtained from the definition of
$[\Delta]^{(3)}$ and the fact that $\Phi(E_k^{(3)})=E_k^{(2)}$.
\\
(3)\; As will be stated in Example \ref{Leech}, the congruence relation
$E_{12}^{(3)}+7\cdot [\Delta]^{(3)} \equiv \vartheta_{\mathcal{L}}^{(3)} \pmod{23}$ holds,
where $\vartheta_{\mathcal{L}}^{(3)}$ is the Siegel theta series associated with the
Leech lattice $\mathcal{L}$ (see Example \ref{Leech}).
Since $a(E_{12}^{(3)},T) \equiv a(\vartheta_{\mathcal{L}}^{(3)},T) \equiv 0 \pmod{23}$
for all $T\in\Lambda_3^+$ (see, Remark \ref{Remark3}, (E-I) and Example \ref{Leech}, (L-I)), 
we can confirm statement (3).
\\
(4)\; For $\mathcal{G}_{12}^{(3)}=-\frac{B_{12}}{24}\cdot E_{12}^{(3)}$, we obtain 
\begin{align*}
[\Delta]^{(3)}-\mathcal{G}_{12}^{(3)}=&c_1\cdot (E_4^{(3)})^3+c_2\cdot (E_6^{(3)})^2+c'_3\cdot E_{12}^{(3)}
+c_4\cdot F_{12},\\
         c'_3=&-\frac{103\cdot 223\cdot 691^2}{2^6\cdot 3^4\cdot 5^3\cdot 7^3\cdot 13\cdot 337}.
\end{align*}
The respective Fourier coefficients of $c_1\cdot (E_4^{(3)})^3$, $c_2\cdot (E_6^{(3)})^2$, and $c_4\cdot F_{12}$ can
be divisible by $691$. Since
$c'_3\cdot E_{12}^{(3)}$ is divisible by $691$ (note that $691^2\mid c'_3$), we obtain
$[\Delta]^{(3)} \equiv \mathcal{G}_{12}^{(3)} \pmod{691}$.
\end{proof}
Statement (3) above shows that claim (D-I) is established, and statement (4)
is a generalization of ``mod $691$ congruence'' between $\Delta$ and 
$G_{12}=\mathcal{G}_{12}^{(1)}=-\frac{B_{12}}{24}\cdot E_{12}^{(1)}$ stated in the Introduction.
\vspace{2mm}
\\
\noindent
{\it Numerical examples}
\vspace{1mm}
\\
We next give some numerical examples showing the validity of claim (3) above:
\begin{align*}
& a([\Delta]^{(3)},[1,1,1;0,0,0])=\frac{2^3\cdot\underline{23}\cdot 487}{7},\quad
a([\Delta]^{(3)},[1,1,1;0,0,1])=2^3\cdot 3\cdot\underline{23}\cdot 419,\\
&  a([\Delta]^{(3)},[1,1,1;1,1,1])=\frac{2^3\cdot \underline{23}\cdot 80429}{7} .  
\end{align*}
Above, we used the abbreviation
$$
[a,b,c,;d,e,f]:={\scriptsize
\begin{pmatrix}a & f/2 & e/2 \\ f/2 & b  & d/2 \\ e/2 & d/2 & c
\end{pmatrix}}\in\Lambda_3.
$$
\end{Ex}
\begin{Ex}
\label{Leech}
\textbf{(Siegel theta series for the Leech lattice)}\quad Let $\mathcal{L}$ be the Leech lattice, 
which is one of the
Niemeier lattices of rank 24. We consider the Siegel theta series associated with $\mathcal{L}$:
$$
\vartheta_{\mathcal{L}}^{(n)}(Z)=
\sum_{X\in M_{24,n}(\mathbb{Z})}\text{exp}(\pi\sqrt{-1}\,\text{tr}(L[X]Z)),\quad Z\in\mathbb{H}_n,
$$
where $L$ is the Gram matrix for $\mathcal{L}$ and $L[X]:={}^tXLX$. The series represents a
Siegel modular form of weight 12:
$$
\vartheta_{\mathcal{L}}^{(n)}\in M_{12}(\Gamma_n)_{\mathbb{Z}}.
$$
Moreover, we have
\vspace{2mm}
\\
(L-I)\; $a(\vartheta_{\mathcal{L}}^{(3)},T) \equiv 0 \pmod{23}$ for all $T\in\Lambda_3^+$,
\vspace{2mm}
\\
(L-II)\; $a(\vartheta_{\mathcal{L}}^{(2)},T) \equiv 0 \pmod{23}$ for $T\in\Lambda_2^+$ with 
$\text{det}(T)\not\equiv 0\pmod{23}$,
\vspace{2mm}
\\
(L-III)\; $a(\vartheta_{\mathcal{L}}^{(1)},t) \equiv 0 \pmod{23}$ for $t\in\mathbb{Z}_{>0}$ with $\chi_{-23}(t)=-1$.
\vspace{2mm}
\\
Statements (L-I) and (L-II) are results in \cite[Theorem 8, (2)]{N-T1} and \cite[Theorem 8, (1)]{N-T1}
respectively.
\vspace{1mm}
\\
Among modular forms $E_{12}^{(3)}$, $[\Delta]^{(3)}$, and $\vartheta_{\mathcal{L}}^{(3)}$,
the following congruence relation holds:
\begin{equation}
\label{mod23}
E_{12}^{(3)}+7\cdot [\Delta]^{(3)} \equiv \vartheta_{\mathcal{L}}^{(3)} \pmod{23}.
\end{equation}
This congruence relation is obtained from the concrete representation of these
modular forms by $(E_4^{(3)})^3$,\,$Y_{12}^{(3)}$,\,$X_{12}^{(3)}$, and Miyawaki's cusp form $F_{12}$:
\begin{align*}
E_{12}^{(3)}&=d_1\cdot (E_4^{(3)})^3+d_2\cdot Y_{12}^{(3)}+d_3\cdot X_{12}^{(3)}+d_4\cdot F_{12},\\
& d_1=1,\quad d_2=-\tfrac{2^7\cdot 3^3\cdot 5^3}{691},\quad  
d_3=\tfrac{2^{15}\cdot 3^5\cdot 5^3\cdot 1759}{131\cdot 593\cdot 691},\quad
d_4=-\tfrac{2^7\cdot 3^3\cdot 5^3\cdot 383\cdot 363343}{131\cdot 593\cdot 691}.\\
[\varDelta]^{(3)}&=e_1\cdot (E_4^{(3)})^3+e_2\cdot Y_{12}^{(3)}+e_3\cdot X_{12}^{(3)}+e_4\cdot F_{12},\\
& e_1=0,\quad e_2=1,\quad e_3=-\tfrac{2^4\cdot 3^2\cdot 5}{7},\quad
e_4=3\cdot 11\cdot 17\cdot 23.\\
\vartheta_{\mathcal{L}}^{(3)}&=f_1\cdot (E_4^{(3)})^3+f_2\cdot Y_{12}^{(3)}+f_3\cdot X_{12}^{(3)}+f_4\cdot F_{12},\\
& f_1=1,\quad f_2=-2^4\cdot3^2\cdot 5,\quad f_3=2^6\cdot 3^3\cdot 5^2,\quad
f_4=-2^4\cdot 3^2\cdot 5\cdot 1571.
\end{align*}
The required congruence (\ref{mod23}) is derived from
$$
d_i+7e_i \equiv f_i \pmod{23}\quad (i=1,2,3,4).
$$
This congruence means that the ``mod 23 properties'' of $\vartheta_{\mathcal{L}}^{(3)}$
are induced by those of $E_{12}^{(3)}$ and $[\Delta]^{(3)}$.  In particular, fact (L-III) is a consequence
of this congruence.
\end{Ex}
\subsection{Siegel Eisenstein series of weight $p+1$}
\label{Eisen2}
We consider the Siegel Eisenstein series of weight $p+1$.
\begin{Thm}
\label{Th2}
{\it Let $p>5$ be a prime number.
Then we have
\vspace{2mm}
\\
{\rm (2-I)}\quad  $a(E^{(5)}_{p+1},T) \equiv 0 \pmod{p}$ for all $T\in \Lambda_5^+$,
\vspace{2mm}
\\
{\rm (2-II)}\quad $a(E^{(4)}_{p+1},T) \equiv 0 \pmod{p}$ for $T\in \Lambda_4^+$ with ${\rm det}(2T)\ne\square$,
\vspace{2mm}
\\
{\rm (2-II')}\quad $a(E^{(3)}_{p+1},T) \equiv 0 \pmod{p}$ for $T\in \Lambda_3^+$ with
${\rm det}(T) \not\equiv 0 \pmod{p}$,
\vspace{2mm}
\\
{\rm (2-III)}\quad   $a(E_{p+1}^{(2)},T) \equiv 0 \pmod{p}$ for $T\in\Lambda_2^+$ with
                 $\chi_T(p)=1$, 
where $\chi_T$ is the quadratic character given in $\S$ \ref{SE}.}
\end{Thm}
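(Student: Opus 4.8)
The plan is to establish the four congruences one at a time from B\"ocherer's explicit formula (Theorem \ref{Bo1}) by bookkeeping the $p$-orders of the Bernoulli factors, reserving Theorem \ref{Odd} for the degree-$3$ assertion. Throughout I set $k=p+1$, and I would open with a short lemma recording two $p$-adic facts that drive everything. First, since $(p-1)\mid 2p-2$, von Staudt--Clausen gives $\mathrm{ord}_p(B_{2p-2})=-1$, hence $\mathrm{ord}_p\!\big(1/B_{2p-2}\big)=+1$. Second, Kummer's congruence (using $p+1\equiv 2p\equiv 2\pmod{p-1}$) shows that $B_{p+1}$ and $B_{2p}$ are $p$-integral with $\mathrm{ord}_p(B_{p+1})=0$ and $\mathrm{ord}_p(B_{2p})=1$; consequently $\tfrac{p+1}{B_{p+1}}$ and $\tfrac{p}{B_{2p}}$ are $p$-adic units.

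For (2-I) I apply the odd formula with $n=5$: the product $\prod_{i=1}^{2}\tfrac{k-i}{B_{2k-2i}}=\tfrac{p}{B_{2p}}\cdot\tfrac{p-1}{B_{2p-2}}$ has $p$-order $0+1=1$, while $\tfrac{k}{B_k}=\tfrac{p+1}{B_{p+1}}$ is a unit and $c^{(5)}_{p+1}(T)\in\mathbb{Z}$, so every coefficient is divisible by $p$. For (2-II$'$) I invoke Theorem \ref{Odd} with $n=3$: indeed $3\equiv 3\pmod 8$, $p>n$, and $k=p+1=\tfrac{n+2p-1}{2}$, while $\beta_p(3,p+1)=\mathrm{ord}_p\!\big(\tfrac{p+1}{B_{p+1}}\tfrac{p}{B_{2p}}\big)=0$, whence $H^{(3)}_{p+1}=E^{(3)}_{p+1}$ and $\varTheta(E^{(3)}_{p+1})\equiv 0\pmod p$, which is exactly (2-II$'$). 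Note that the bare B\"ocherer estimate only yields $p$-order $\ge 0$ in degree $3$, so the theta-operator input is genuinely needed here.

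For (2-II) the even formula with $n=4$ leaves $\tfrac{B_{p-1,\chi_T}}{B_{2p-2}}$ as the only nonunit factor, and the $+1$ from $1/B_{2p-2}$ suffices once $B_{p-1,\chi_T}$ is shown $p$-integral. This is where the hypothesis $\det(2T)\ne\square$ (equivalently, $\chi_T$ nontrivial) enters: writing $B_{p-1,\chi_T}=f_T^{\,p-2}\sum_{a=1}^{f_T}\chi_T(a)B_{p-1}(a/f_T)$ and expanding the Bernoulli polynomial, the unique term that can carry $p$ in its denominator is the constant term $B_{p-1}$, whose total contribution is $B_{p-1}f_T^{\,p-2}\sum_a\chi_T(a)=0$ for nontrivial $\chi_T$ when $p\nmid f_T$; the ramified subcase $p\mid f_T$ is covered by the generalized von Staudt--Clausen (Carlitz) theorem. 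If instead $\det(2T)=\square$ then $\chi_T$ is trivial and $B_{p-1,\chi_T}$ degenerates to $B_{p-1}$ with $\mathrm{ord}_p=-1$, exactly cancelling the $+1$; this explains why the hypothesis cannot be dropped.

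The case (2-III) is the crux and the main obstacle. The even formula with $n=2$ gives $a(E^{(2)}_{p+1},T)=4\cdot\tfrac{p+1}{B_{p+1}}\cdot\tfrac{B_{p,\chi_T}}{B_{2p}}\cdot b^{(2)}_{p+1}(T)$, where now $1/B_{2p}$ contributes $p$-order $-1$, so I must produce the extra divisibility $\mathrm{ord}_p(B_{p,\chi_T})\ge 2$. For this I would use the Kummer congruence for generalized Bernoulli numbers with its Euler factor: since $p\equiv 1\pmod{p-1}$, comparing the exponents $m=p$ and $m=1$ yields $(1-\chi_T(p)p^{p-1})\tfrac{B_{p,\chi_T}}{p}\equiv(1-\chi_T(p))B_{1,\chi_T}\pmod p$. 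Because $\chi_T(p)=1$ forces $p\nmid f_T$ (otherwise $\chi_T(p)=0$), this congruence applies; its $p$-integrality makes $\tfrac{B_{p,\chi_T}}{p}$ $p$-integral and congruent to the left side, while the right side vanishes, so $p^2\mid B_{p,\chi_T}$ and the coefficient has $p$-order $\ge 2-1=1$. The delicate points on which I expect to spend the most care are the precise statement and integrality hypotheses of this Euler-factor Kummer congruence, and the ramified subcase $p\mid f_T$ in (2-II); everything else reduces to the $p$-order bookkeeping fixed by the opening lemma.
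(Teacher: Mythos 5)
Your proof is correct and follows essentially the same route as the paper: B\"ocherer's formula plus $p$-order bookkeeping of the Bernoulli factors for (2-I), (2-II), (2-III), and Theorem \ref{Odd} with $\beta_p(3,p+1)=0$ for (2-II$'$). The only cosmetic difference is that where you expand the Bernoulli polynomial directly for the $p$-integrality of $B_{p-1,\chi_T}$ and invoke the Euler-factor Kummer congruence for $B_{p,\chi_T}/p$, the paper cites the equivalent statements of Carlitz (his Theorem 3 and formula (5.5)).
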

\begin{proof}
(2-I):\; By Theorem \ref{Bo1}, (2), for $T\in\Lambda_5^+$, we can write
$$
a(E_{p+1}^{(5)},T)=\frac{p+1}{B_{p+1}}\cdot\frac{p}{B_{2p}}
\cdot\frac{p-1}{B_{2(p-1)}}\cdot d_{1,p}(T)
$$
with $d_{1,p}\in\mathbb{Z}_{(p)}$. We examine the $p$-divisibility of each factor on the
right-hand side of the above identity.
\\
By Kummer's congruence relation, $B_{p+1}/(p+1)\equiv B_2/2=1/12 \pmod{p}$.
Hence, $(p+1)/B_{p+1}\in\mathbb{Z}_{(p)}$.
By a theorem of Adams (see, e.g., \cite[Chap.5, \textsc{Exercises} 5.10]{Wa}),
we see that $B_{2p} \equiv 0 \pmod{p}$. However, it follows from Kummer's congruence
that $B_{2p}/(2p) \equiv B_{p+1}/(p+1) \equiv 1/12 \pmod{p}$. Hence, $p/B_{2p}\in\mathbb{Z}_{(p)}$.
Regarding the factor $(p-1)/B_{2(p-1)}$, we have  $(p-1)/B_{2(p-1)}\in p\mathbb{Z}_{(p)}$ by
von Staudt-Clausen's theorem. Hence, we see that
$a(E_{p+1}^{(5)},T) \equiv 0 \pmod{p}$ for all $T\in\Lambda_5^+$.
\\
(2-II):\; Assume that $T\in\Lambda_4^+$. Then, by Theorem \ref{Bo1}, (1),
we can write
$$
a(E_{p+1}^{(4)},T)=\frac{p+1}{B_{p+1}}\cdot\frac{p}{B_{2p}}
\cdot\frac{B_{p-1,\chi_T}}{B_{2(p-1)}}\cdot d_{2,p}(T)
$$
for some constant $d_{2,p}(T)\in\mathbb{Z}_{(p)}$.
As in the case of (2-I), we see that the factor $((p+1)/B_{p+1})\cdot(p/B_{2p})$ is $p$-integral.
\\
We examine the factor $B_{p-1,\chi_T}/B_{2(p-1)}$. We note
that $B_{2(p-1)}^{-1}\in p\mathbb{Z}_{(p)}$ is derived from von Staudt-Clausen's theorem.
Regarding the factor $B_{p-1,\chi_T}$, it happens that the character $\chi_T$ is trivial in this case.
In fact, if $\text{det}(2T)$ is square, then $\chi_T$ becomes the trivial character, and we obtain
$$
\frac{B_{p-1,\chi_T}}{B_{2(p-1)}}=\frac{B_{p-1}}{B_{2(p-1)}} \equiv 1 \pmod{p}
$$ 
by von Staudt-Clausen's theorem. If $\text{det}(2T)$ is not square, then the congruence
$$
\frac{B_{p-1,\chi_T}}{B_{2(p-1)}} \equiv 0 \pmod{p}
$$
holds because the denominator of $B_{m,\chi_T}$ is divisible by $p$ only when
$f_T=p$ and $m=\frac{p-1}{2}\cdot t$ for an odd integer $t$\,
(see Carlitz \cite[Theorem 3]{C}). Consequently, the factor $B_{p-1,\chi_T}/B_{2(p-1)}$,
and therefore, $a(E_{p+1}^{(4)},T)$ is divisible by $p$ if $\text{det}(2T)$ is not square.
\\
(2-II'):\; It is sufficient to show that $\varTheta (E_{p+1}^{(3)}) \equiv 0 \pmod{p}$.
By Theorem \ref{Odd}, if we set
$$
\beta_p(3,p+1)=\text{ord}_p(((p+1)\cdot p)/(B_{p+1}\cdot B_{2p})),
$$
then $H_{p+1}^{(3)}=p^{-\beta_p(3,p+1)}\cdot E_{p+1}^{(3)}$ satisfies
$\varTheta(H_{p+1}^{(3)}) \equiv 0 \pmod{p}$.
Since $\beta_p(3,p+1)=0$ in the present case (see (2-I)), we obtain
$$
\varTheta (E_{p+1}^{(3)})=\varTheta(H_{p+1}^{(3)}) \equiv 0 \pmod{p}.
$$
(2-III):\; For $T\in\Lambda_2^+$, we can write
$$
a(E_{p+1}^{(2)},T)=\frac{p+1}{B_{p+1}}\cdot\frac{p}{B_{2p}}\cdot\frac{B_{p,\chi_T}}{p}\cdot d_{3,p}(T)
$$
for some $d_{3,p}(T)\in\mathbb{Z}_{(p)}$. We examine each factor on the right-hand side like in 
the case of (2-I) and (2-II). 
For $p$-integrality of the factors $(p+1)/B_{p+1}$ and $p/B_{2p}$, we have already shown it in 
the case of (2-I). 

We shall show that the factor $B_{p,\chi_T}/p$ is divisible by $p$ when $\chi_T(p)=1$.\\
In the case that $p\nmid f_T$, the following congruence relation holds:
$$
\frac{B_{p,\chi_T}}{p} \equiv \frac{1}{f_T}(1-\chi_T(p))\sum_{s=1}^{f_T} s\chi_T(s) \pmod{p}
$$
(see Carlitz \cite[p.182, (5.5)]{C}).
\\
When $p\mid f_T$, we see that
$\displaystyle B_{p,\chi_T}/p\in\mathbb{Z}_{(p)}$
(see Carlitz \cite[Theorem 3]{C}).\\
Combining these facts, we obtain
$$
\frac{B_{p,\chi_T}}{p} \equiv 0 \pmod{p}\;\;\text{if}\;\; \chi_T(p)=1.
$$
Consequently, we see that
$$
a(E_{p+1}^{(2)},T) \equiv 0 \pmod{p}\;\;\text{if}\;\;\chi_T(p)=1.
$$
This completes the proof of Theorem \ref{Th2}.
\end{proof}
\subsection{General degree case}
{\rm For general degree $n$, we have an example of modular forms that fit stages I and II.}
\begin{Thm}
\label{Gen}
\;{\it
Assume that $n$ is an even positive integer and $p$ is a prime number satisfying $p>n+3$ and
$p \equiv (-1)^\frac{n}{2} \pmod{4}$. We set $k=\frac{n+p-1}{2}$. Then there are modular
forms $F_k^{(n+1)}\in M_k(\Gamma_{n+1})_{\mathbb{Z}_{(p)}}$ and $F_k^{(n)}\in M_k(\Gamma_n)_{\mathbb{Z}_{(p)}}$
satisfying $\Phi(F_k^{(n+1)})=F_k^{(n)}$ and
\vspace{2mm}
\\
{\rm (F-I)}\quad $a(F_k^{(n+1)},T) \equiv 0 \pmod{p}$ for all $T\in\Lambda_{n+1}^+$,
\vspace{2mm}
\\
{\rm (F-II)}\quad $a(F_k^{(n)},T) \equiv 0 \pmod{p}$ for $T\in\Lambda_n^+$ with ${\rm det}(T)\not\equiv 0 \pmod{p}$.}
\end{Thm}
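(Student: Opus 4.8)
The plan is to produce the two forms as suitably normalized Siegel Eisenstein series, using the \emph{same} normalizing constant in both degrees. Concretely, I would set
$$F_k^{(n)} := G_k^{(n)} = p^{-\alpha_p(n,k)}E_k^{(n)}, \qquad F_k^{(n+1)} := p^{-\alpha_p(n,k)}E_k^{(n+1)},$$
where $k=\frac{n+p-1}{2}$. With this choice the two structural requirements are immediate. Since $\Phi(E_k^{(n+1)})=E_k^{(n)}$ and $\Phi$ is linear, $\Phi(F_k^{(n+1)})=F_k^{(n)}$. Moreover Theorem \ref{Even} applies directly to degree $n$ and yields both $F_k^{(n)}=G_k^{(n)}\in M_k(\Gamma_n)_{\mathbb{Z}_{(p)}}$ and $\varTheta(F_k^{(n)})\equiv 0\pmod p$; the latter is exactly statement (F-II), since $a(F_k^{(n)},T)\cdot\text{det}(T)\equiv 0\pmod p$ forces $a(F_k^{(n)},T)\equiv 0\pmod p$ whenever $\text{det}(T)\not\equiv 0\pmod p$.

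The substance is therefore (F-I). Here $n+1$ is odd, so I would apply Theorem \ref{Bo1}(2): for $T\in\Lambda_{n+1}^+$,
$$a(E_k^{(n+1)},T)=2^{n+1}\cdot\frac{k}{B_k}\cdot\prod_{i=1}^{n/2}\frac{k-i}{B_{2k-2i}}\cdot c_k^{(n+1)}(T),\qquad c_k^{(n+1)}(T)\in\mathbb{Z}.$$
The product $\prod_{i=1}^{n/2}$ contains exactly one factor more than the product defining $\alpha_p(n,k)$ (whose index runs only to $(n-2)/2=n/2-1$), namely the $i=n/2$ term $\frac{k-n/2}{B_{2k-n}}$. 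Splitting off this term, what remains, multiplied by $p^{-\alpha_p(n,k)}$, is a $p$-adic unit by the very definition of $\alpha_p(n,k)$. With $k=\frac{n+p-1}{2}$ one computes $k-n/2=\frac{p-1}{2}$ and $2k-n=p-1$, so the extra factor is $\frac{(p-1)/2}{B_{p-1}}$. By von Staudt-Clausen's theorem $\text{ord}_p(B_{p-1})=-1$, whence $\text{ord}_p\!\big(\tfrac{(p-1)/2}{B_{p-1}}\big)=+1$. Since $2^{n+1}$ is a $p$-unit ($p$ is odd) and $c_k^{(n+1)}(T)\in\mathbb{Z}$, I conclude $\text{ord}_p\!\big(a(F_k^{(n+1)},T)\big)\ge 1$ for every $T\in\Lambda_{n+1}^+$, which is (F-I). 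This is precisely the mechanism already visible in the proof of Theorem \ref{Th1}(1-I), now carried out in general degree; the point of normalizing by $p^{-\alpha_p(n,k)}$ rather than by the full $p$-power is that the ``under-normalization'' at degree $n+1$ leaves exactly one surplus power of $p$ in the top coefficients.

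Finally I would check that $F_k^{(n+1)}\in M_k(\Gamma_{n+1})_{\mathbb{Z}_{(p)}}$, i.e.\ that all Fourier coefficients, including those indexed by degenerate $T$, are $p$-integral. The rank-$(n+1)$ coefficients are covered by (F-I). For $T$ of rank $r\le n$, the standard identity $a(E_k^{(n+1)},\text{diag}(T',0))=a(E_k^{(r)},T')$ (following from $\Phi(E_k^{(m)})=E_k^{(m-1)}$ together with $GL_{n+1}(\mathbb{Z})$-invariance of Fourier coefficients) gives $a(F_k^{(n+1)},T)=a\big(\Phi^{\,n-r}(F_k^{(n)}),T'\big)$, which is $p$-integral because $F_k^{(n)}$ is $p$-integral and $\Phi$ manifestly preserves $p$-integrality. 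I expect this uniform tracking of $p$-integrality across all ranks to be the only genuinely delicate bookkeeping; the divisibility heart of the theorem, (F-I), reduces cleanly to the single input $\text{ord}_p(B_{p-1})=-1$.
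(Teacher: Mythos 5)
Your proposal is correct and follows essentially the same route as the paper: the same choice $F_k^{(n)}=p^{-\alpha_p(n,k)}E_k^{(n)}$ and $F_k^{(n+1)}=p^{-\alpha_p(n,k)}E_k^{(n+1)}$, (F-II) via Theorem \ref{Even}, and (F-I) by isolating the single surplus factor $\tfrac{(p-1)/2}{B_{p-1}}$ (the paper writes it as $\tfrac{p-1}{B_{p-1}}$, a $p$-unit multiple) in B\"ocherer's formula and invoking von Staudt--Clausen. Your closing check of $p$-integrality of the degenerate coefficients of $F_k^{(n+1)}$ is a small addition the paper leaves implicit, but it does not change the argument.
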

\begin{proof}
We apply Theorem \ref{Even}. As $F_k^{(n)}$, we adopt $G_k^{(n)}$ in Theorem \ref{Even}.
Namely,
\begin{align*}
F_k^{(n)}&:=G_k^{(n)}=p^{-\alpha_p(n,k)}\cdot E_k^{(n)},\\
            & \alpha_p(n,k)=\text{ord}_p\left(\frac{k}{B_{k}}\cdot\prod_{i=1}^{(n-2)/2}\frac{k-i}{B_{2k-2i}}\right),
\quad k=\frac{n+p-1}{2}.
\end{align*}
Then, by Theorem \ref{Even}, it follows that
$$
\varTheta (F_k^{(n)})=\varTheta(G_k^{(n)}) \equiv 0 \pmod{p}.
$$
This proves statement (F-II). For $F_k^{(n+1)}$, we use
$$
F_k^{(n+1)}:=p^{-\alpha_p(n,k)}\cdot E_k^{(n+1)},
$$
where we adopt $p^{-\alpha_p(n,k)}$ as the multiplying constant. Therefore, we see that
$\Phi(F_k^{(n+1)})=F_k^{(n)}$.

By Theorem \ref{Bo1}, (2), we can write
$$
a(E_k^{(n+1)},T)=\frac{n+p-1}{B_{\frac{n+p-1}{2}}}\cdot
                     \frac{(n+p-3)\cdot\cdots\cdot (p+1)}{B_{n+p-3}\cdot\cdots\cdot B_{p+1}}\cdot
                     \frac{p-1}{B_{p-1}}\cdot d_p(T)
$$
for some $d_p(T)\in\mathbb{Z}_{(p)}$. Since
$$
\alpha_p(n,k)=\text{ord}_p\left( \frac{n+p-1}{B_{\frac{n+p-1}{2}}}\cdot
                     \frac{(n+p-3)\cdot\cdots\cdot (p+1)}{B_{n+p-3}\cdot\cdots\cdot B_{p+1}} \right),
$$
we obtain
\begin{align*}
\text{ord}_p(a(F_k^{(n+1)},T)) &=\text{ord}_p(a(E_k^{(n+1)},T))-\alpha_p(n,k)\\
                                         & \geq \text{ord}_p((p-1)/B_{p-1})=1,
\end{align*}
by von Staudt-Clausen's theorem.  Therefore,
$$
a(F_k^{(n+1)},T) \equiv 0 \pmod{p}\;\;\text{for}\;\;\text{all}\;\;T\in\Lambda_{n+1}^+.
$$
This proves (F-I) and completes the proof of Theorem \ref{Gen}.
\end{proof}
\section{On the relationship with $p$-adic Siegel Eisenstein series}
\label{sec4}
{\rm
In this section, we mention the relationship between some of the
results obtained above and the $p$-adic Eisenstein series.
\vspace{2mm}
\\
First, we consider the case of the Eisenstein series of weight $\frac{p+1}{2}$ treated in
$\S$ \ref{Eisen1}.
\\
We assume that $p>3$ and $p \equiv 3 \pmod{4}$.  In \cite{N0}, the following identity was proved:
$$
\lim_{m\to\infty}E_{1+\frac{1}{2}p^{m-1}(p-1)}^{(n)}=\text{genus}\,\vartheta^{(n)}(B_{-p}^{(2)}).
$$
Here, the left-hand side is the $p$-adic limit of the sequence of Eisenstein series 
$$
\left\{E_{1+\frac{1}{2}p^{m-1}(p-1)}^{(n)}\right\}_{m=1}^\infty
$$
and the right-hand side is the genus theta series associated with binary quadratic forms of discriminant
$-p$ and level $p$. Considering the first approximation of the limit (namely, $m=1$), we obtain
$$
E_{\frac{p+1}{2}}^{(3)} \equiv \lim_{m\to\infty}E_{1+\frac{1}{2}p^{m-1}(p-1)}^{(3)}
                              =\text{genus}\,\vartheta^{(3)}(B_{-p}^{(2)}) \pmod{p}.
$$
Statements (1-I) and (1-II) in Theorem \ref{Th1} can be also derived from this equality.

Next we consider the case of weight $p+1$.
\vspace{1mm}
\\
In \cite{K-N}, the following identity was proved:
$$
\lim_{m\to\infty}E_{2+p^{m-1}(p-1)}^{(n)}=\text{genus}\,\vartheta^{(n)}(S_{p^2}^{(4)}).
$$
Here the left-hand side is the $p$-adic Eisenstein series in the above case, and the right-hand
side is the genus theta series associated with quaternary quadratic forms of discriminant $p^2$
and level $p$. As in the above case, by considering the first approximation of the limit, we
obtain
$$
E_{p+1}^{(5)} \equiv \lim_{m\to\infty}E_{2+p^{m-1}(p-1)}^{(5)}=\text{genus}\,\vartheta^{(5)}(S_{p^2}^{(4)})
\pmod{p}.
$$
Statements (2-I) and (2-II) in Theorem \ref{Th2} are also proved using this identity.
}
\section{Remark}
{\rm 
In $\S$ 3, we proved
$$
[\Delta]^{(3)} \equiv G_{12}^{(3)} \pmod{691}\quad \text{and}\quad
E_{12}^{(3)}+7\cdot [\Delta]^{(3)} \equiv \vartheta_{\mathcal{L}}^{(3)} \pmod{23},
$$
(see, Proposition \ref{Delta3}, (3), and Example \ref{Leech}).
These results lead to the following problem.
\vspace{2mm}
\\
\textbf{Problem:}\quad Construct a modular form $[\Delta]^{(n)}$ in $M_{12}(\Gamma_n)$
that satisfies all of the following conditions:
\begin{align*}
& (1)\quad \Phi^{n-1}([\varDelta]^{(n)})=\varDelta,\qquad
(2)\quad E_{12}^{(n)}+7\cdot [\varDelta]^{(n)} \equiv \vartheta_L^{(n)} \pmod{23}\\
& (3)\quad[\varDelta]^{(n)} \equiv \mathcal{G}_{12}^{(n)} \pmod{691}.
\end{align*}
Here $\mathcal{G}_{12}^{(n)}:=-\frac{B_{12}}{24}\cdot E_{12}^{(n)}$.
}
{\rm 

}


\begin{thebibliography}{9}
\bibitem{B1}
S.~B\"{o}cherer,
\"{U}ber gewisse Siegelsche Modulformen zweiten Grades,
Math. Ann. \textbf{261}, 23-41(1982)

\bibitem{B2}
S.~B\"{o}cherer, 
\"{U}ber die Fourierkoeffizienten der Siegelschen Eisensteinreihen,
manuscripta math. \textbf{45}, 273-288(1984)

\bibitem{B-S}
Z.L.~Borevich and I.R.~Shafarevich,
Number Theory,
Academic Press 1966


\bibitem{C}
L.~Carlitz,
Arithmetic properties of generalized Bernoulli numbers,
J. reine und ang. Math. \textbf{202}, Heft 314, 174-182(1959)

\bibitem{K-N}
H.~Katsurada and S.~Nagaoka,
A remark on $p$-adic Siegel Eisenstein series,
arXiv/4295825[math. NT], 2022

\bibitem{Mi}
I.~Miyawaki,
Numerical examples of Siegel cusp forms of degree 3 and their
zeta functions,
Mem. Fac. Sci. Kyushu Univ. \textbf{46}, 307-339(1992)

\bibitem{N0}
S.~Nagaoka,
A remark on Serre's example of $p$-adic Eisenstein series,
Math. Z. \textbf{315}, 227-250(2000)


\bibitem{N1}
S.~Nagaoka,
On the mod $p$ kernel of the theta operator,
Proc. Amer. Math. Soc. \textbf{143}, 4237-4244(2015)


\bibitem{N-T1}
S.~Nagaoka and S.~Takemori,
Notes on theta series for Niemeier lattices,
Ramanujan J. \textbf{42}, 385-400(2017)

\bibitem{N-T2}
S.~Nagaoka and S.~Takemori,
On the mod $p$ kernel of the theta operator and Eisenstein series,
J. Number Theory \textbf{188}, 281-298(2018)


\bibitem{Ser}
J-P.~Serre,
Une interpr\'{e}tation des congruences relatives \`{a} la fonction $\tau$
de Ramanujan,
S\'{e}minaire Delange-Pisot-Poitou, Th\'{e}orie des nombres, \textbf{9},
$\text{n}^{\text{o}}$14, 1-17(1967-1968)


\bibitem{Wa}
L.C.~Washington,
Introduction to Cyclotomic Fields\,(Second Edition), 1996,
Springer Verlag

\bibitem{W}
J.R.~Wilton,
Congruence properties of Ramanujan's function $\tau(n)$,
Proc. London Math. Soc. \textbf{31}, 1-30(1930)

\end{thebibliography}
\end{document}